\documentclass[11pt]{article}
\usepackage{}
\usepackage{amsfonts}
\usepackage{amssymb}
\usepackage{amsmath}
\usepackage{dutchcal}
\usepackage{mathrsfs}

\def\sqr#1#2{{\vcenter{\vbox{\hrule height.#2pt
              \hbox{\vrule width.#2pt height#1pt \kern#1pt \vrule
width.#2pt}
              \hrule height.#2pt}}}}
\def\signed #1{{\unskip\nobreak\hfil\penalty50
              \hskip2em\hbox{}\nobreak\hfil#1
              \parfillskip=0pt \finalhyphendemerits=0 \par}}
\def\endpf{\signed {$\sqr69$}}
\def\3n{\negthinspace \negthinspace \negthinspace }
\def\2n{\negthinspace \negthinspace }
\def\1n{\negthinspace }
\def\ns{\noalign{\smallskip} }
\def\ns{\noalign{\medskip} }

\def\no{\noindent}

\def\({\Big (}
\def\){\Big )}
\def\[{\Big[}
\def\]{\Big]}

\def\bel{\begin{equation}\label}
\def\ee{\end{equation}}
\def\bea{\begin{eqnarray}}
\def\eea{\end{eqnarray}}
\def\bt{\begin{theorem}}
\def\et{\end{theorem}}
\def\bc{\begin{corollary}}
\def\ec{\end{corollary}}
\def\bl{\begin{lemma}}
\def\el{\end{lemma}}
\def\bp{\begin{proposition}}
\def\ep{\end{proposition}}
\def\br{\begin{remark}}
\def\er{\end{remark}}
\def\ba{\begin{array}}
\def\ea{\end{array}}
\def\bd{\begin{definition}}
\def\ed{\end{definition}}

\newtheorem{lemma}{Lemma}[section]
\newtheorem{remark}{Remark}[section]

\newtheorem{theorem}{Theorem}[section]
\newtheorem{corollary}{Corollary}[section]

\newtheorem{definition}{Definition}[section]
\newtheorem{proposition}{Proposition}[section]

\makeatletter
   
   \@addtoreset{equation}{section}
\makeatother

\begin{document}

\title{\bf   Optimal control problems for quasi-linear parabolic
equations and their linear approximations\thanks{This work was partially supported by the National Key R\&D Program of China
under grants 2023YFA1009002 and 2024YFA1013400, the NSF of China under grant 12371444, the
New Cornerstone Investigator Program, and the Science Development Project of Sichuan University
under grant 2020SCUNL201.}}

\author{Xu Liu\thanks{School of Mathematics and Statistics, Northeast Normal
University, Changchun 130024, China.   E-mail:
liux216@nenu.edu.cn.} \quad Meisai Wang\thanks{School of Mathematics and Statistics, Northeast Normal
University, Changchun 130024, China.   E-mail: wangms901@nenu.edu.cn.
} \quad and \quad  Xu Zhang\thanks{School of Mathematics and New Cornerstone Science Laboratory,  Sichuan University,
Chengdu  610064, China. E-mail address: zhang\_xu@scu.edu.cn.}}

\date{}

\maketitle

\begin{abstract}
Motivated  by   thermal diffusion models,
a controlled quasi-linear parabolic equation
 can be formally replaced by its linear approximation,  when the initial datum,
 target and control actions
 are sufficiently small.
This paper makes this approximation precise at the level
of optimal control problems.  First, we provide
verification criteria which distinguish whether
an optimal control is an interior point or a boundary point of the admissible control set.
  We
then prove that the differences between the optimal controls
and the corresponding optimal
trajectories for the quasi-linear and linear parabolic equations are
higher-order infinitesimals
with respect to the small initial datum and target appearing in the cost functional.
 Consequently, for small data,
  the optimal control problem governed by the quasi-linear equation can
be approximated by the corresponding linear problem. In the interior-point case,
 the resulting
approximation order is sharp. \end{abstract}

\medskip

\no{\bf Key Words}.
Quasi-linear parabolic equation,  linear parabolic equation,
     optimal control,  optimal trajectory

\medskip

\no{\bf AMS subject classifications}.  35K59, 49J20, 49K20, 93C20.

\date{}
\maketitle

\section{Introduction}

In many engineering practices, nonlinear models that describe complex dynamic processes are often approximated by linear models.
When the approximation error is kept within an acceptable range,  the simplified linear models
can improve  computational efficiency, reduce costs and provide a more
tractable analytical framework. It is therefore natural to ask when a nonlinear model and its  control
problems can be reliably replaced by their linear counterparts.

\smallskip

The linear-quadratic (LQ)  problem stands as one of milestones in optimal control theory.
It
concerns linear dynamical systems with quadratic cost functionals,  and seeks an optimal balance
between tracking performance and control effort.
 The LQ problems  have broad applications in many fields such as aerospace engineering,
 chemical process control and environmental engineering. For instance, during spacecraft re-entry,
  intense aerodynamic friction produces high heat flux and a highly nonuniform
  temperature field on the surface. Distributed heating or cooling devices may then be regulated so
that the surface temperature remains within safe limits while the energy expenditure is minimized.
This type of design can be naturally formulated as an LQ problem. The LQ framework is also
attractive,  because it admits efficient numerical methods and, in many settings, feedback laws with
a transparent structure.

\smallskip

This paper is mainly concerned with the connection between  optimal control problems for quasi-linear parabolic equations and the corresponding LQ problem. Let  $T>0$,  $n\in\mathbb N$ and  $\Omega $ be a   bounded domain in
$\mathbb{R}^n$  with a smooth  boundary $\Gamma$.
Put $Q=\Omega\times(0,T)$ and $\Sigma=\Gamma\times(0,T)$.
Assume that $\omega$ is  a  nonempty open subset of $\Omega$ satisfying
  $\overline{\omega}\subseteq\Omega$, and
$\chi_{\omega}$ denotes the characteristic function on the set $\omega$.
Consider the following controlled quasi-linear parabolic equation:
\begin{eqnarray}\label{aeq11}
	\left\{
	\begin{array}{ll}
		y_t-\sum\limits_{i,j=1}^{n}(a^{i j}(y)y_{x_i})_{x_j}+f(y)=\chi_{\omega} u &\mbox{ in }Q, \\ \ns
		y=0 &\mbox{ on }\Sigma, \\ \ns
		y(x, 0)=y_0(x) &\mbox{ in }\Omega, \\ \ns
	\end{array}
	\right.
\end{eqnarray}
where $u$ is the control  variable and $y$ is the state variable,
$y_0$ is a given initial value,
and
$f(\cdot) \in C^2(\mathbb R)$ with $f(0)=0$. Further, $a^{i j}(\cdot)\in C^2(\mathbb R)$ satisfies
$a^{i j}=a^{j i}$ $(i, j=1, \cdots, n)$, and there exists a positive constant $\rho_0$, such that
$$
\sum\limits_{i,j=1}^n a^{i j}(s)\zeta_i\zeta_j\geq \rho_0 |\zeta|^2, \quad
\forall\ (s, \zeta)=(s, \zeta_1, \cdots, \zeta_n)\in \mathbb{R}^{1+n}.
$$

Quasi-linear parabolic equations
may  describe many   diffusion  phenomena in nature.
Suppose
  that a
  body   $\Omega$  is uniform and anisotropic.
 Denote by $y$, $c$ and $\rho$
 the temperature distribution, the specific heat and the
  density
   of the
  body, respectively. Then
 the conservation law  gives
$
c\rho y_t+\text{div}\, {\bf J}+f=u,
$
where  ${\bf J}$ is the heat flux vector, $\text{div}\, {\bf J}$ denotes its divergence, and
$f$  and $u$ are the heat sources (of some different kinds).
By the Fourier experiment law,  the heat  flux  vector takes the form of   ${\bf J}=
-\mathcal{A}\nabla y$
with a thermal conductivity coefficient $\mathcal{A}=(a^{i j})_{1\leq i, j\leq n}$.
When the thermal conductivity of material of the body $\Omega$
depends on temperature,  $\mathcal{A}$ has the form of $a^{i j}=a^{i j}(y)$.
For example,
the thermal conductivity of copper and aluminum
 generally decreases as temperature increases.
Further, when the heat source $f$
 is generated by certain chemical reactions, it usually  also depends  on
 the temperature distribution $y$.  We assume that  the  heat source
 $u$  can be artificially regulated  on a local domain $\omega$. Then the thermal  diffusion
 process
  may be described as
\begin{equation}\label{DD1}
c \rho y_t-\sum\limits_{i,j=1}^{n}(a^{i j}(y)y_{x_i})_{x_j}+f(y)=\chi_{\omega} u.
\end{equation}
 When  $y$ is sufficiently small, $f(y)$
 takes an approximate value of zero.
 Further, if we set $\widehat{a}^{i j}=a^{i j}(0)$ and
 $\widetilde{a}^{i j}(\cdot)=a^{i j}(\cdot)-\widehat{a}^{i j}$,
 then $\widetilde{a}^{i j}(0)=0$.
 Hence, formally,
the quasi-linear parabolic equation (\ref{DD1})
can be approximately  replaced by the following linear parabolic equation:
\begin{equation}\label{DD2}
c \rho y_t-\sum\limits_{i,j=1}^{n}(\widehat{a}^{i j} y_{x_i})_{x_j}=\chi_{\omega} u.
\end{equation}
For simplicity of presentation,   we assume in the sequel  that
   $c\rho=1$; and $\widehat{a}^{i j}=1$ for $i=j$ and $\widehat{a}^{i j}=0$ for $i\neq j$. In such a case, (\ref{DD1}) is
    indeed the first equation in (\ref{aeq11}), and
   (\ref{DD2}) becomes the classical controlled heat  conduction equation:
   \begin{eqnarray}\label{aeq1*}
	\left\{
	\begin{array}{ll}
		w_t-\Delta w=\chi_{\omega} v &\mbox{ in }Q, \\ \ns
		w=0 &\mbox{ on }\Sigma, \\ \ns
		w(x, 0)=y_0(x) &\mbox{ in }\Omega. \\ \ns
	\end{array}
	\right.
\end{eqnarray}
Here, we use $w$ and $v$ in (\ref{aeq1*}) to replace, respectively,  $y$ and $u$ in (\ref{aeq11}),
in order to avoid confusion of notations.

\smallskip

First, we recall  the following local well-posedness results for
strong solutions to (\ref{aeq11}).  By the  Schauder fixed point theorem and
the $L^p$-estimates for linear parabolic equations,
for any $p>n+2$,  there exists a positive constant $\rho_1$,
such that for any $u\in L^p(\omega\times(0, T))$ and $y_0\in W^{2, p}(\Omega)\cap H^1_0(\Omega)$
satisfying that
$$
|u|_{L^p(\omega\times(0, T))}\leq \rho_1\quad\mbox{and}\quad
|y_0|_{W^{2, p}(\Omega)}\leq \rho_1,
$$
the quasi-linear parabolic equation (\ref{aeq11})
admits a unique strong solution $y\in W^{2,1}_p(Q)\cap C^{1, 0}(\overline{Q})$. Moreover,
\begin{equation}\label{App1}
|y|_{C^{1, 0}(\overline{Q})}+|y|_{W^{2,1}_p(Q)}\leq \mathcal{C}(|u|_{L^p(\omega\times(0, T))}+|y_0|_{W^{2, p}(\Omega)}),
\end{equation}
where $\mathcal{C}$ denotes a positive constant, depending only on
$n$, $T$, $\Omega$, $\omega$, $p$, $a^{i j}$ and $f$, which may be different from one
place to another.
In the sequel, in order to ensure the local well-posedness of (\ref{aeq11}),
we  always assume  $\rho_1<1$ and  the initial value
$y_0\in W^{2, p}(\Omega)\cap H^1_0(\Omega)$
with $
|y_0|_{W^{2, p}(\Omega)}\leq \rho_1$,  and set $\mathcal{U}_{ad}=\Big\{\ u\in L^p(\omega\times(0, T))\ \big|\ |u|_{L^p(\omega\times(0, T))}\leq\rho_1\ \Big\}$.

\smallskip

Further,  define the following quadratic  cost functionals  on $\mathcal{U}_{ad}$:
$$
J_1(u(\cdot))=\frac{\alpha}{2}\displaystyle\int_Q |y(x, t)-y_d(x, t)|^2 dxdt+
\frac{\beta}{2}\displaystyle\int^T_0\int_{\omega} |u(x, t)|^2 dxdt,
$$
and
$$
J_2(v(\cdot))=\frac{\alpha}{2}\displaystyle\int_Q |w(x, t)-y_d(x, t)|^2 dxdt+
\frac{\beta}{2}\displaystyle\int^T_0\int_{\omega} |v(x, t)|^2 dxdt,
$$
where $\alpha$ and $\beta$ are two positive constants, $y_d\in L^{p}(Q)$ is a given
target,
and $y$ and $w$ are the  solutions to (\ref{aeq11}) and (\ref{aeq1*}) associated to $u$ and $v$,
respectively.

\smallskip

We consider the following two optimal control problems:
$$
{\bf (P_1)} \quad \min\limits_{u(\cdot)\in \mathcal{U}_{ad}} J_1(u(\cdot))
\quad\quad\mbox{ and }\quad\quad
{\bf (P_2)} \quad \min\limits_{v(\cdot)\in \mathcal{U}_{ad}} J_2(v(\cdot)).
$$
The functions $\overline{u}$ and $\overline{v}$ in $\mathcal{U}_{ad}$
are called optimal controls, respectively,  for the optimal control problems ${\bf (P_1)}$ and  ${\bf (P_2)}$, if
$$
J_1(\overline{u})=\min\limits_{u(\cdot)\in \mathcal{U}_{ad}} J_1(u(\cdot))
\quad\quad\mbox{ and }\quad\quad
J_2(\overline{v})=\min\limits_{v(\cdot)\in \mathcal{U}_{ad}} J_2(v(\cdot)).
$$
Indeed,  ${\bf (P_2)}$ is an LQ problem and  has a unique optimal control $\overline{v}$. Meanwhile,   there  exists at
least an  optimal control $\overline{u}$ for  ${\bf (P_1)}$, whose proof will be given in the Appendix.
Denote by $\overline{y}$ and $\overline{w}$ the optimal trajectories to
 (\ref{aeq11}) and (\ref{aeq1*}), respectively,  associated to the optimal controls $\overline{u}$ and $\overline{v}$.

\smallskip

First,  we give a verification condition
 determining whether the optimal control is an interior point or a boundary point of
 $\mathcal{U}_{ad}$.

\begin{proposition}\label{APR}
The following results hold for the optimal control problems ${\bf (P_1)}$ and  ${\bf (P_2)}$.

\noindent $(1)$  There exists a  positive constant $\rho^*_1$,
such that when $\beta/\alpha>\rho^*_1$,
the optimal controls $\overline{u}$ and  $\overline{v}$
are the interior points of  $\mathcal{U}_{ad}$;

\noindent $(2)$ Assume that either $y_d(x, t)\geq d_0$ or $y_d(x, t)\leq -d_0$ in $Q$ for
some positive constant $d_0$. Then, for each sufficiently small $\rho_1$,
 there exists a  positive constant $\rho^*_2$,
such that when $\beta/\alpha\leq\rho^*_2$,
the optimal controls $\overline{u}$ and  $\overline{v}$
are the boundary points of  $\mathcal{U}_{ad}$.
\end{proposition}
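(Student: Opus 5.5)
The plan is to compare the cost of the optimal control with the cost of the zero control, which is always admissible. Write $\gamma=\beta/\alpha$ and note that minimizers of $J_i$ coincide with minimizers of $J_i/\alpha$.

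\textbf{Part (1).} Since $0\in\mathcal U_{ad}$, we have $J_1(\overline u)\le J_1(0)$. Let $y^0$ be the solution of (\ref{aeq11}) with $u=0$. Then
$$
\frac{\beta}{2}|\overline u|^2_{L^2(\omega\times(0,T))}\le \frac{\alpha}{2}|y^0-y_d|^2_{L^2(Q)}\le \alpha\,\mathcal C,
$$
where the last bound uses (\ref{App1}) with $u=0$, the bound $|y_0|_{W^{2,p}}\le\rho_1<1$, and $y_d\in L^p(Q)\subset L^2(Q)$. Hence $|\overline u|_{L^2}^2\le \mathcal C/\gamma$. This only controls the $L^2$ norm, whereas interiority is measured in $L^p$ with $p>n+2$.

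To upgrade to $L^p$, I would use the first-order optimality condition. Because $\mathcal U_{ad}$ is a ball, the condition reads $\beta\overline u+\chi_\omega\overline\varphi=-\lambda\overline u$ with a multiplier $\lambda\ge0$. Here $\overline\varphi$ is the adjoint state, solving the linearized backward equation with source $\alpha(\overline y-y_d)$. The existence of the multiplier and the differentiability of the control-to-state map should follow from the $W^{2,1}_p$ theory behind (\ref{App1}); I expect this to be routine.

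It then follows that $\overline u=-\chi_\omega\overline\varphi/(\beta+\lambda)$. By $L^p$ estimates for the adjoint equation, whose coefficients are bounded in $C^{1,0}$ uniformly in view of (\ref{App1}), we get $|\overline\varphi|_{L^p}\le \alpha\,\mathcal C(|\overline y|_{L^p}+|y_d|_{L^p})\le\alpha\,\mathcal C$. Therefore
$$
|\overline u|_{L^p}\le \frac{\alpha\,\mathcal C}{\beta}=\frac{\mathcal C}{\gamma}.
$$
Choosing $\rho^*_1=2\mathcal C/\rho_1$ gives $|\overline u|_{L^p}\le\rho_1/2<\rho_1$, so $\overline u$ is interior. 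The same argument, with the heat-equation adjoint, handles $\overline v$.

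\textbf{Part (2).} Suppose, say, $y_d\ge d_0$. I would argue by contradiction: assume $\overline u$ is interior, so $\lambda=0$ and $\beta\overline u=-\chi_\omega\overline\varphi$.

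First, any admissible trajectory satisfies $|y|_{C(\overline Q)}\le \mathcal C\cdot 2\rho_1$ by (\ref{App1}). Taking $\rho_1$ so small that $2\mathcal C\rho_1<d_0/2$ gives $y-y_d\le -d_0/2$ in $Q$. The adjoint $\overline\varphi$ solves a backward linear parabolic equation with source $\alpha(\overline y-y_d)\le-\alpha d_0/2$ and zero final datum. By comparison with a supersolution, $\overline\varphi\le -c_0\alpha d_0$ on $\omega\times(0,T-\delta)$, with $c_0>0$ depending only on the uniform coefficient bounds. The natural supersolution is built from the adjoint with constant source $-1$; the zeroth-order term $f'(\overline y)$ is bounded, so an exponential-in-time weight handles it.

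Consequently $\overline u=-\overline\varphi/\beta\ge c_0 d_0/\gamma$ on a set of fixed positive measure. This gives $|\overline u|_{L^p}\ge c_1/\gamma$, and if $\gamma\le\rho^*_2:=c_1/(2\rho_1)$ then $|\overline u|_{L^p}\ge 2\rho_1$, contradicting admissibility. So $\overline u$ lies on the boundary. The case $y_d\le-d_0$ is symmetric, and the linear problem is identical.

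The main obstacle is the uniform positive lower bound for $-\overline\varphi$ on $\omega\times(0,T-\delta)$, independent of the (unknown) optimal state. I would obtain it via the weak maximum principle applied to $\overline\varphi+\alpha d_0\psi$, where $\psi$ is a fixed positive subsolution. Such a $\psi$ would be built from the first Dirichlet eigenfunction, with time factor $(T-t)e^{-Mt}$, with $M$ absorbing the first-order terms coming from $a^{ij}{}'(\overline y)\nabla\overline y$, which are bounded by (\ref{App1}).
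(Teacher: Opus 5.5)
Your argument follows the paper's route in both parts. In (1), the decisive facts are the pointwise bound $\beta|\overline u|\le|\overline\varphi|$ and the $W^{2,1}_p$ estimate for the adjoint, with constants independent of $\alpha,\beta$. The paper uses exactly these in (\ref{lwz4})--(\ref{lwz8}), but phrases them as a contradiction in the boundary case rather than as your direct bound $|\overline u|_{L^p}\le\mathcal C/\gamma$; with your direct bound, the comparison with the zero control is superfluous. In (2), interiority gives $\beta\overline u=-\overline\varphi$ on $\omega\times(0,T)$. The adjoint is then bounded away from zero on $\omega\times(0,T-\delta)$ by the maximum principle against a small positive subsolution, which is the paper's $\phi=\alpha Mt\tilde\psi$ argument after time reversal.

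One step is wrong as written. $\mathcal U_{ad}$ is a ball in $L^p$ with $p>n+2>2$, so at a boundary point its normal cone is generated by $|\overline u|^{p-2}\overline u$, not by $\overline u$. The correct multiplier rule is
$$
-\chi_\omega\overline\varphi=\big(\beta+\lambda|\overline u|^{p-2}\big)\overline u\quad\mbox{a.e. in }\omega\times(0,T),\qquad \lambda\ge 0,
$$
which is (\ref{lwz4}) with $\eta=-\overline\varphi$. Your formula $\overline u=-\chi_\omega\overline\varphi/(\beta+\lambda)$ is the one for an $L^2$ ball and fails on the boundary. This is harmless, because the correct relation still yields $\beta|\overline u|\le|\overline\varphi|$ a.e., and that is all your estimate in (1) uses.

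Similarly, in (2) the exponential weight in $\psi$ absorbs nothing, since $-\partial_t$ of $(T-t)e^{-Mt}$ is positive. What makes $\psi$ satisfy the required subsolution inequality is a small constant prefactor depending on the uniform coefficient bounds, as with the paper's small $M$.

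With these fixes your version is slightly cleaner than the paper's in two places. Your $\rho^*_1$ honestly depends on $|y_d|_{L^p(Q)}/\rho_1$, whereas the paper's bound $|\eta|_{W^{2,1}_p(Q)}\le C_1\alpha\rho_1$ tacitly assumes $|y_d|_{L^p(Q)}\lesssim\rho_1$. Working with the $L^p$ norm in (2) also avoids the paper's step $|\overline u|_{L^2(\omega\times(0,T))}<\rho_1$.
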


The  purpose of this paper is  to  establish a connection between   the optimal control problems ${\bf (P_1)}$ and  ${\bf (P_2)}$.  More precisely,
  we shall show that,    when the initial values $y_0$ and
  targets $y_d$ in the cost functionals are  sufficiently small in  the suitable Sobolev spaces,  the gap between the optimal trajectories
for the quasi-linear and linear parabolic equations
is a high-order  infinitesimal with respect to these given data.
  This fact  indicates that
the study of the optimal control problem of (\ref{aeq11})
  can be replaced by the LQ problem of (\ref{aeq1*}), and the errors  between them may be ignored.
In the literature, there exist extensive works addressing to optimal control problems of
 parabolic equations (e.g., \cite{Bar}-\cite{H}, \cite{Li}-\cite{Wang},   and the rich references therein).
  However, to the best of our knowledge,
the approximation relationship on optimal control problems between nonlinear
and linear partial differential equations  has never been considered
before.

\medskip

 First,   we introduce the following assumption on  $f$:
\begin{eqnarray*}
\begin{array}{ll}
&\displaystyle
{\bf (H) }\quad  f(0)=f'(0)=0.
 \end{array}
\end{eqnarray*}
Then we have the following approximation result between the quasi-linear parabolic equation (\ref{aeq11}) and  the linear heat equation (\ref{aeq1*})
(with the same controls and initial values).

\begin{proposition}\label{prop1}
Assume that ${\bf (H)}$ holds. Then for any $y_0\in W^{2, p}(\Omega)\cap H^1_0(\Omega)$
and $u\in L^p(\omega\times(0, T))$ satisfying that
 $$
 |y_0|_{W^{2, p}(\Omega)}+|u|_{L^p(\omega\times(0, T))}\leq \rho_1,
 $$
 the following estimate holds for the corresponding solutions $y$ and $w$
 to $(\ref{aeq11})$ and $(\ref{aeq1*})$ with $(v=u)$:
$$
|y-w|_{W^{2,1}_p(Q)}\leq
C\big(|y_0|^2_{W^{2, p}(\Omega)}+
|u|^{2}_{L^p(\omega\times(0, T))}
\big),
$$
here and hereafter,  $C$ denotes a positive constant, depending only on
$n$, $T$, $\Omega$,  $\omega$, $p$, $a^{i j}$, $f$, $\alpha$ and $\beta$, which may be different from one
place to another.

\end{proposition}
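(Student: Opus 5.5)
Set $z=y-w$. Since both equations carry the same control $\chi_\omega u$ and the same initial datum $y_0$, these cancel, and $z$ solves the linear heat equation
\begin{eqnarray*}
\left\{
\begin{array}{ll}
z_t-\Delta z=F &\mbox{ in }Q, \\
z=0 &\mbox{ on }\Sigma, \\
z(x,0)=0 &\mbox{ in }\Omega,
\end{array}
\right.
\end{eqnarray*}
where, with $\widetilde a^{ij}(s)=a^{ij}(s)-\delta_{ij}$ (so $\widetilde a^{ij}(0)=0$),
$$
F=\sum_{i,j=1}^n\big(\widetilde a^{ij}(y)y_{x_i}\big)_{x_j}-f(y)
=\sum_{i,j=1}^n\widetilde a^{ij}(y)y_{x_ix_j}+\sum_{i,j=1}^n (a^{ij})'(y)\,y_{x_i}y_{x_j}-f(y).
$$
The identity $y_t-\Delta y=F+\chi_\omega u$ follows by expanding $a^{ij}=\delta_{ij}+\widetilde a^{ij}$ in (\ref{aeq11}). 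The $L^p$ maximal regularity estimate for the heat equation with zero initial and boundary data then gives $|z|_{W^{2,1}_p(Q)}\le \mathcal C|F|_{L^p(Q)}$. It therefore suffices to bound $|F|_{L^p(Q)}$ by $C(|y_0|_{W^{2,p}}+|u|_{L^p})^2$.

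Put $M=|y_0|_{W^{2,p}(\Omega)}+|u|_{L^p(\omega\times(0,T))}\le\rho_1<1$. By (\ref{App1}), $|y|_{C^{1,0}(\overline Q)}+|y|_{W^{2,1}_p(Q)}\le \mathcal C M$, so in particular $|y|\le \mathcal C$ and $|\nabla y|\le \mathcal CM$ uniformly on $\overline Q$. Since $a^{ij}\in C^2$ and $\widetilde a^{ij}(0)=0$, the mean value theorem gives $|\widetilde a^{ij}(y)|\le \mathcal C|y|\le \mathcal CM$. Since $f\in C^2$ and (H) holds, Taylor's formula gives $|f(y)|\le \mathcal C|y|^2\le \mathcal CM^2$. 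We then estimate the three terms of $F$ separately:
\begin{itemize}
\item $|\widetilde a^{ij}(y)y_{x_ix_j}|_{L^p}\le \|\widetilde a^{ij}(y)\|_{L^\infty}|y|_{W^{2,1}_p}\le \mathcal CM^2$;
\item $|(a^{ij})'(y)y_{x_i}y_{x_j}|_{L^p}\le \mathcal C\|\nabla y\|_{L^\infty}^2|Q|^{1/p}\le \mathcal CM^2$;
\item $|f(y)|_{L^p}\le \mathcal CM^2$.
\end{itemize}
Combining these, $|y-w|_{W^{2,1}_p(Q)}\le CM^2\le 2C\big(|y_0|^2_{W^{2,p}}+|u|^2_{L^p}\big)$.

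The only delicate point is the expansion of the divergence term. It requires $y\in W^{2,1}_p\cap C^{1,0}$, so that the chain rule applies and $y_{x_i}y_{x_j}$ is bounded. This regularity is supplied by the well-posedness result and (\ref{App1}) because $p>n+2$. Everything else is routine once $y$ is known to be quantitatively small in $C^{1,0}$, so that every nonlinear term is at least quadratic in $M$.
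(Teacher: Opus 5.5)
Your proposal is correct and follows the paper's proof essentially step by step: the paper also sets $z=y-w$, applies the $L^p$-estimate for the heat equation with zero initial and boundary data to get $|z|_{W^{2,1}_p(Q)}\le C|F|_{L^p(Q)}$, expands the divergence term, and bounds $\widetilde a^{ij}(y)y_{x_ix_j}$, $(\widetilde a^{ij})'(y)y_{x_i}y_{x_j}$ and $f(y)$ quadratically using $\widetilde a^{ij}(0)=f(0)=f'(0)=0$ together with (\ref{App1}). The only difference is cosmetic: the paper bounds the gradient-product term by $|y|_{C^{1,0}(\overline{Q})}|\nabla y|_{L^p(Q)}$ instead of $\|\nabla y\|^2_{L^\infty(Q)}|Q|^{1/p}$.
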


Proposition \ref{prop1}
indicates that
when the initial values and controls are sufficiently small,
the gap between the  solutions to the quasi-linear
parabolic equation (\ref{aeq11}) and the linear parabolic equation (\ref{aeq1*})
is a high-order infinitesimal with respect to  given small data.
The proof of Proposition \ref{prop1} is based on the
 $L^p$-estimates for linear parabolic equations, and it will be given in the Appendix.

\medskip

 The first main result of this paper on  optimal control problems  is as follows.
\begin{theorem}\label{th2}
Assume that ${\bf (H)}$  holds.
Then there exist two constants $\rho_2, \theta\in (0, 1)$, such that
 for any   $y_0\in W^{2, p}(\Omega)\cap H^1_0(\Omega)$
 and $y_d\in L^{p}(Q)$ satisfying that
 $$
 |y_0|_{W^{2, p}(\Omega)}+|y_d|_{L^p(Q)}\leq \rho_2,
 $$
 the following estimate holds for the optimal pairs $(\overline{u}, \overline{y})$ and $(\overline{v}, \overline{w})$
 of the optimal control problems ${\bf (P_1)}$ and ${\bf (P_2)}$:
$$
|\overline{u}-\overline{v}|_{L^2(\omega\times(0, T))}+
|\overline{y}-\overline{w}|_{W^{2,1}_2(Q)}\leq
C\big(|y_d|_{L^p(Q)}+|y_0|_{W^{2, p}(\Omega)}\big)^{1+\theta}.
$$
\end{theorem}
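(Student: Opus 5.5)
Set $\varepsilon:=|y_d|_{L^p(Q)}+|y_0|_{W^{2,p}(\Omega)}\le\rho_2$. The plan is to compare the two problems through a quadratic-growth (strong convexity) argument for $J_2$, combined with Proposition \ref{prop1} to pass between $J_1$ and $J_2$.

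\emph{Step 1: a priori size of the optimal controls.} Comparing with the zero control gives
$$
\frac{\beta}{2}|\overline u|^2_{L^2}\le J_1(\overline u)\le J_1(0)\le C\varepsilon^2 .
$$
Here we use \eqref{App1} for the free trajectory, so that $|y(0)|\le C|y_0|_{W^{2,p}}$. The same bound holds for $\overline v$. This only controls the $L^2$ norm, whereas Proposition \ref{prop1} needs $L^p$ norms. We therefore only know $|\overline u|_{L^p}\le\rho_1$ a priori, and we interpolate: $|u|_{L^q}\le |u|_{L^2}^{\lambda}|u|_{L^p}^{1-\lambda}$.

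The cleanest route is to derive from the optimality system an $L^p$ bound $|\overline u|_{L^p}\le C\varepsilon$. If $\overline u$ is interior, then $\overline u=-\beta^{-1}\chi_\omega\varphi$, where $\varphi$ is the adjoint state. The adjoint equation has data $\alpha(\overline y-y_d)$, and $L^p$ parabolic regularity plus bootstrapping gives $|\varphi|_{L^p}\le C(|\overline y|_{L^p}+|y_d|_{L^p})$. In the boundary case, the Lagrange multiplier form gives $\overline u=-\mu^{-1}\chi_\omega\varphi$-type relations. If the $L^p$ bound cannot be obtained in general, I would fall back on interpolation: Proposition \ref{prop1} can be redone with $L^2$-based estimates in $W^{2,1}_2$, using the $C^{1,0}$ smallness from \eqref{App1}. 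This gives $|y-w|_{W^{2,1}_2}\le C(|y_0|+|u|_{L^p})\,(|y_0|+|u|_{L^2})$, and the loss of the full power is exactly where $\theta<1$ comes from.

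\emph{Step 2: near-optimality transfer.} Write $w(u)$ for the heat solution driven by $u$, and set $\delta(u):=|y(u)-w(u)|_{L^2(Q)}$. For any admissible $u$, $|J_1(u)-J_2(u)|\le C\delta(u)(|y(u)|+|w(u)|+|y_d|)_{L^2}$. Using Step 1 together with Proposition \ref{prop1} (or its interpolated version), this difference is $\le C\varepsilon^{2+\theta'}$ at both $u=\overline u$ and $u=\overline v$. Hence
$$
J_2(\overline u)\le J_1(\overline u)+C\varepsilon^{2+\theta'}\le J_1(\overline v)+C\varepsilon^{2+\theta'}\le J_2(\overline v)+2C\varepsilon^{2+\theta'} .
$$

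\emph{Step 3: strong convexity.} $J_2$ is a quadratic functional with Hessian $\ge\beta$, and $\mathcal U_{ad}$ is convex, so the variational inequality at $\overline v$ gives
$$
\frac{\beta}{2}|\overline u-\overline v|^2_{L^2}\le J_2(\overline u)-J_2(\overline v)\le C\varepsilon^{2+\theta'} .
$$
Thus $|\overline u-\overline v|_{L^2}\le C\varepsilon^{1+\theta}$ with $\theta=\theta'/2$.

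\emph{Step 4: trajectories.} Split
$$
\overline y-\overline w=\big(y(\overline u)-w(\overline u)\big)+\big(w(\overline u)-w(\overline v)\big).
$$
The second term is bounded by $C|\overline u-\overline v|_{L^2}$ by $W^{2,1}_2$ regularity of the heat equation. The first term is bounded by the $W^{2,1}_2$ version of Proposition \ref{prop1}, which gives $C\varepsilon^{1+\theta}$.

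The main obstacle is Step 1/Step 2: obtaining superlinear smallness of $y(\overline u)-w(\overline u)$ when $\overline u$ is a priori only known to be small in $L^2$, and not in $L^p$. I would first try the adjoint-based $L^p$ bound. Failing that, I would use interpolation between the $L^2$ bound $C\varepsilon$ and the $L^p$ bound $\rho_1$ at an intermediate exponent $q>n+2$ where needed, which yields some $\theta\in(0,1)$.
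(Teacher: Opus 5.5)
Your argument is correct, but the step that produces the control estimate differs from the paper's.

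The paper never compares cost values. It takes $u=\overline v$ in (\ref{c5}) and $u=\overline u$ in (\ref{PP1}), and adds them to get $\beta|\overline u-\overline v|^2_{L^2(\omega\times(0,T))}\le\int_0^T\int_\omega(\eta-z)(\overline u-\overline v)\,dxdt$. It then tests the coupled error system for $(Y,W)=(\overline y-\overline w,\eta-z)$ against $(W,Y)$. This gives $|\overline u-\overline v|_{L^2}+|Y|_{L^2}\le C(|F_1|_{L^2(Q)}+|F_2|_{L^2(Q)})$, with a state residual $F_1$ and an adjoint residual $F_2$. Their smallness of order $\varepsilon^{1+\theta}$ comes from exactly your fallback device: interpolating at some $r^*\in(n+2,p)$ between the $L^2$ bound $C\varepsilon$ and the $L^p$ bound $\rho_1$, which yields $C^{1,0}$ bounds of order $\varepsilon^\theta$.

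You instead use only three ingredients: the global minimality of $\overline u$ for $J_1$ over $\mathcal U_{ad}\ni\overline v$, a $W^{2,1}_2$ state-level version of Proposition \ref{prop1} at the two controls, and the $\beta$-strong convexity of $J_2$. Your route needs no adjoint for the quasi-linear problem, no bound on $F_2$ (which involves $(a^{ij})'(\overline y)$ and $\eta$), and no first-order condition for ${\bf (P_1)}$. It therefore applies to any minimizer of ${\bf (P_1)}$, even an approximate one. The price is that strong convexity turns a cost gap of order $\varepsilon^{2+\theta'}$ into a control gap of order $\varepsilon^{1+\theta'/2}$. You lose half the exponent, which the paper's optimality-system argument keeps in full. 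Since the theorem only claims some $\theta\in(0,1)$, this is harmless.

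One remark on Step 1: your hesitation about the boundary case is unnecessary. On the $L^p$-ball the multiplier relation is not linear; it is (\ref{lwz4}), i.e.\ $\eta=\beta\overline u+\lambda|\overline u|^{p-2}\overline u$ with $\lambda\ge 0$. This still gives $|\overline u|\le\beta^{-1}|\eta|$ a.e.\ in $\omega\times(0,T)$. So your bootstrap yields $|\overline u|_{L^p}\le C\varepsilon$ in both cases, and hence, for sufficiently small data, $\overline u$ is in fact an interior point. The interpolation fallback is enough for the theorem anyway.
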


Theorem \ref{th2}  gives  the quantitative comparison between the
 optimal control problems for the quasi-linear parabolic equation
  (\ref{aeq11}) and the linear heat equation (\ref{aeq1*}).
 The optimal controls
$\overline{u}$ and $\overline{v}$  not only respectively optimize the cost functionals
$J_1$ and $J_2$,
 but also  the errors between  their associated  solutions  are  a high-order infinitesimal
  with respect to given small  data.  This indicates that, in some sense one may replace the optimal control  problem ${\bf(P_1)}$ for the quasi-linear parabolic equation (\ref{aeq11}) by an
  LQ problem.

\smallskip

On the other hand,  in the case where the optimal controls are  interior points of $\mathcal{U}_{ad}$, we have the following
sharp approximation result.
\begin{theorem}\label{th3}
Assume that ${\bf (H)}$  holds, and $\overline{u}$ and  $\overline{v}$
are interior points of $\mathcal{U}_{ad}$.
Then there exists a  constant $\rho_3\in (0, 1)$, such that
 for any   $y_0\in W^{2, p}(\Omega)\cap H^1_0(\Omega)$
 and $y_d\in L^{p}(Q)$ satisfying that
 $$
 |y_0|_{W^{2, p}(\Omega)}+|y_d|_{L^p(Q)}\leq \rho_3,
 $$
 the following estimate holds for the optimal pairs $(\overline{u}, \overline{y})$ and $(\overline{v}, \overline{w})$
 of the optimal control problems ${\bf (P_1)}$ and ${\bf (P_2)}$:
$$
|\overline{u}-\overline{v}|_{L^p(\omega\times(0, T))}+
|\overline{y}-\overline{w}|_{W^{2,1}_p(Q)}\leq
C\big(|y_d|^{2}_{L^p(Q)}+|y_0|^2_{W^{2, p}(\Omega)}\big).
$$
\end{theorem}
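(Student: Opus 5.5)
Since $\overline{u}$ and $\overline{v}$ are interior points of $\mathcal{U}_{ad}$, the constraint is inactive and both satisfy the unconstrained first-order optimality systems. For ${\bf (P_2)}$ this reads $\overline{v}=-\frac{\alpha}{\beta}\chi_\omega \overline{q}$, where $\overline{q}$ solves the backward heat equation $-\overline{q}_t-\Delta\overline{q}=\overline{w}-y_d$ with $\overline{q}=0$ on $\Sigma$ and $\overline{q}(T)=0$ (after normalizing the multiplier so that $\alpha$ sits on the adjoint source). For ${\bf (P_1)}$, differentiability of the control-to-state map on the small ball (which follows from (\ref{App1}) and the implicit function theorem in $W^{2,1}_p(Q)$) gives $\overline{u}=-\frac{\alpha}{\beta}\chi_\omega\overline{p}$. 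Here $\overline{p}$ solves the linearized adjoint
$$
-\overline{p}_t-\sum_{i,j}(a^{ij}(\overline{y})\overline{p}_{x_i})_{x_j}+\sum_{i,j}(a^{ij})'(\overline{y})\overline{y}_{x_i}\overline{p}_{x_j}+f'(\overline{y})\overline{p}=\overline{y}-y_d ,
$$
with $\overline{p}=0$ on $\Sigma$ and $\overline{p}(T)=0$. The plan is to subtract the two optimality systems and estimate the differences $z=\overline{y}-\overline{w}$, $r=\overline{p}-\overline{q}$ and $\overline{u}-\overline{v}=-\frac{\alpha}{\beta}\chi_\omega r$ in $W^{2,1}_p(Q)$ and $L^p$.

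First I will obtain a priori smallness: $|\overline{u}|_{L^p}+|\overline{v}|_{L^p}+|\overline{y}|_{W^{2,1}_p}+|\overline{w}|_{W^{2,1}_p}+|\overline{p}|_{W^{2,1}_p}+|\overline{q}|_{W^{2,1}_p}\le C\varepsilon$, where $\varepsilon=|y_d|_{L^p(Q)}+|y_0|_{W^{2,p}(\Omega)}$. Comparing with the control $0$ gives $J_1(\overline{u})\le J_1(0)\le C\varepsilon^2$, hence $L^2$-smallness of the control. To bootstrap to $L^p$, I will use the adjoint representation: $L^2$ bounds on $\overline{y}-y_d$ give $\overline{p}\in W^{2,1}_2$, and then parabolic embedding followed by $L^p$-estimates gives $\overline{p}\in W^{2,1}_p$. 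Since $\overline{u}=-\frac{\alpha}{\beta}\chi_\omega\overline{p}$, a finite number of such iterations yields $|\overline{u}|_{L^p}\le C\varepsilon$, and then (\ref{App1}) bounds $\overline{y}$. The linear problem is treated in the same way.

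Next I write the equations for the differences. For the state, $z$ solves
$$
z_t-\Delta z+\frac{\alpha}{\beta}\chi_\omega r=\sum_{i,j}(\widetilde a^{ij}(\overline{y})\overline{y}_{x_i})_{x_j}-f(\overline{y}),\qquad z(0)=0 .
$$
For the adjoint, $r$ solves the backward heat equation with source $z$ plus the terms $\sum(\widetilde a^{ij}(\overline{y})\overline{p}_{x_i})_{x_j}-\sum(a^{ij})'(\overline{y})\overline{y}_{x_i}\overline{p}_{x_j}-f'(\overline{y})\overline{p}$. By ${\bf (H)}$ and $\widetilde a^{ij}(0)=0$, every right-hand-side remainder is quadratic in $(\overline{y},\overline{p})$; for example $|f(\overline{y})|\le C|\overline{y}|^2$ and $|\widetilde a^{ij}(\overline{y})|\le C|\overline{y}|$. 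Using the $C^{1,0}(\overline{Q})$ bound on $\overline{y}$, these remainders are $O(\varepsilon^2)$ in $L^p(Q)$, in the same spirit as Proposition \ref{prop1}.

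The main obstacle is solving the coupled forward–backward linear system for $(z,r)$ with a constant independent of smallness. It is the optimality system of an LQ problem, so I will first derive an $L^2$ energy estimate. Multiplying the $z$-equation by $r$, the $r$-equation by $z$, and subtracting gives
$$
\frac{\alpha}{\beta}\int|\chi_\omega r|^2+\int|z|^2\le \int(g_1 r-g_2 z),
$$
where $g_1,g_2$ are the remainders, and hence $|z|_{L^2}+|\chi_\omega r|_{L^2}\le C\varepsilon^2$. Then I will bootstrap through the $L^p$-estimates for the decoupled heat equations: $r$ is controlled by its source, $z$ is controlled by $\chi_\omega r$, and iterating the embeddings $W^{2,1}_q\hookrightarrow L^{q'}$ with $q'>q$ raises the integrability up to $p$. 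This yields $|z|_{W^{2,1}_p}+|r|_{W^{2,1}_p}\le C\varepsilon^2$, which is the estimate claimed in Theorem \ref{th3}, since $\varepsilon^2\le 2(|y_d|^2+|y_0|^2)$.

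One subtlety is that $\overline{u}$ need not be unique. The argument above applies to any optimal control that is an interior point, because it only uses the first-order conditions together with the smallness obtained in the first step.
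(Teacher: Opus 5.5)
Your proposal is correct and follows essentially the same route as the paper. Both arguments take the unconstrained optimality systems from the interior-point assumption, and both show via ${\bf (H)}$ and $\widetilde{a}^{ij}(0)=0$ that the remainders $F_1,F_2$ (your $g_1,g_2$) are quadratic. Both then use an $L^2$ duality/energy estimate for the coupled difference system, followed by an $L^q$-bootstrap up to $W^{2,1}_p$, with a priori $O(\varepsilon)$ bounds on the optimal state and adjoint obtained from $J_1(\overline{u})\le J_1(0)$ and the same bootstrap. Your adjoint is just $-\eta/\alpha$ in the paper's normalization. The one step you compress is bounding $\int_Q g_1 r$, which involves $r$ on all of $Q$, through $|r|_{L^2(Q)}\le C(|z|_{L^2(Q)}+|g_2|_{L^2(Q)})$ from the backward heat equation; the paper treats it the same way.
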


\begin{remark}
The second-order approximation obtained in Theorem $\ref{th3}$  is optimal in
general. The key reason is that the linear parabolic equation is not an
arbitrarily chosen approximate model, but rather the first-order linearization
of the quasi-linear equation around the zero equilibrium. In other words, when
the initial datum, the target, and the corresponding optimal control are all
small, the quasi-linear system and the linear system have exactly the same
first-order response. Therefore, no first-order error appears between the two
systems.

Under the assumption ${\bf(H)}$, the nonlinear source term $f$ has neither a
constant term nor a linear term at the origin, while the difference between the
diffusion coefficient $a^{ij}(y)$ and its value $a^{ij}(0)$ enters the equation
only after being multiplied by the state gradient. Hence, the first genuine
difference between the quasi-linear equation and its linearized equation is of
quadratic order. Intuitively, if the size of the initial datum and the target
is denoted by $\delta$, then the linear system captures all leading variations
of order $\delta$, whereas the quasi-linear effects first appear at order
$\delta^2$.

This second-order error cannot generally be removed. Unless additional
degeneracy or special cancellation conditions are imposed on the nonlinear
terms, for instance requiring the relevant quadratic nonlinear contributions to
vanish identically, these quadratic terms will genuinely affect the state
equation, the adjoint equation, and hence the optimal control determined by
the optimality condition. Thus, in the general quasi-linear setting, one cannot
expect the difference between the quasi-linear optimal pair and the linear
optimal pair to be of order higher than two.

In this sense, the estimate in Theorem $\ref{th3}$ shows not only that the
linear optimal control problem provides an effective approximation of the
quasi-linear optimal control problem, but also that this approximation reaches
the best order allowed by the nonlinear structure of the equation. That is,
order two is the optimal approximation order one can expect in general.

In the following, we give an example to illustrate this point.  Assume that $\Omega=(0, \pi)$
and $\omega=\Omega$.  Let $\delta > 0$ be a small parameter representing
the magnitude of the data, and define $y_0(x) = \delta \sin x$ and $y_d(x, t) = \delta \sin x.$
Further,  we assume that $\alpha=1$,
   $a^{i j}=1$ for $i=j$ and $a^{i j}=0$ for $i\neq j$, and $f(s)=s^2$. Then, $(\ref{aeq11})$ and
   $(\ref{aeq1*})$ become, respectively, the following semi-linear and linear parabolic equations:
\begin{eqnarray}\label{!aeq11}
	\left\{
	\begin{array}{ll}
		y_t-y_{xx}+y^2= u &\mbox{ in }Q, \\ \ns
		y=0 &\mbox{ on }\Sigma, \\ \ns
		y(x, 0)=\delta \sin x &\mbox{ in }\Omega,
	\end{array}
	\right.
\end{eqnarray}
and
 \begin{eqnarray}\label{!aeq1*}
	\left\{
	\begin{array}{ll}
		w_t-w_{xx}=v &\mbox{ in }Q, \\ \ns
		w=0 &\mbox{ on }\Sigma, \\ \ns
		w(x, 0)=\delta \sin x  &\mbox{ in }\Omega.
	\end{array}
	\right.
\end{eqnarray}
By Proposition $\ref{APR}$,
when  $\beta$ is sufficiently large,  the optimal controls $\overline{u}$ and  $\overline{v}$
 are interior points of the admissible control set.  By Theorem $\ref{th3}$,
 $$
 |\overline{u}-\overline{v}|_{L^2(\Omega\times(0, T))}\leq C\delta^2.
 $$

Now, we will prove that $|\overline{u}-\overline{v}|_{L^2(\Omega\times(0, T))}\geq C_0\delta^2$ for a
positive constant $C_0$, which implies the optimality of  order $2$. To this aim,   first, for the linear system
$(\ref{!aeq1*})$,  notice that
$$
\overline{v}=\delta v_1, \quad \overline{w}=\delta w_1,\quad  z=\delta z_1,\quad \mbox{ and }\quad  v_1=\beta^{-1} z_1,
$$
where $v_1$, $w_1$ and $z_1$ are the optimal control, optimal trajectory and solution to
 the adjoint equation $z_{1, t}+z_{1, xx}=w_1-\sin x$, respectively, associated to $\delta=1$.

For the semi-linear system $(\ref{!aeq11})$, set
$$
\overline{u}=\delta u_1+\delta^2 u_2+O(\delta^3),\
\overline{y}=\delta y_1+\delta^2 y_2+O(\delta^3)
\mbox{ and }
\eta=\delta \eta_1+\delta^2 \eta_2+O(\delta^3),\
$$
where $O(\delta^3)$ denotes the infinitesimals of the same order with $\delta^3$, and
$\eta$ is the solution to the adjoint equation $\eta_t+\eta_{xx}-2\overline{y}\eta=\overline{y}-\delta \sin x$.
Substituting the above expansions into the optimality system of the optimal control
problem ${\bf(P_1)}$ and  matching the terms of order $\delta$,
we have that
$y_{1, t}-y_{1, xx}= u_1$ with $y_1(x, 0)=\sin x$,
$\eta_{1, t}+\eta_{1, xx}=y_1-\sin x$ with $\eta_1(x, T)=0$,
and $u_1=\beta^{-1}\eta_1$.
Hence, it follows that
$$
u_1=v_1,\quad y_1=w_1\quad \mbox{ and } \quad \eta_1=z_1,
$$
and the first-order error between $\overline{u}$ and $\overline{v}$ vanishes.
This implies that $\overline{u}-\overline{v}=\delta^2 u_2+O(\delta^3)=\delta^2\beta^{-1}\eta_2+O(\delta^3)$.

It suffices to prove that $|\eta_2|_{L^2(\Omega\times(0, T))}> 0$.
 By matching the terms of order $\delta^2$ in the optimality system of ${\bf(P_1)}$,
we have that
$y_{2, t}-y_{2, xx}=\beta^{-1}\eta_2-y_1^2$ with $y_2(x, 0)=0$,  and
$\eta_{2, t}+\eta_{2, xx}=y_2+2y_1\eta_1$ with $\eta_2(x, T)=0$.
By contradiction, we assume that $\eta_2=0$ in $Q$.
Then, $y_{2, t}-y_{2, xx}=-y^2_1$ in $Q$ and $y_2(x, 0)=0$.  Notice that when
 $\rho$ is sufficiently small,
$y_1>0$ in $(\pi/2, 3\pi/4)\times (0, T]$ and therefore,
$|y_2(\cdot, T)|_{L^2(\Omega)}>0$. This implies that
\begin{equation}\label{AA}
\lim\limits_{t \rightarrow T^-}
\big[ y_2(x,t) + 2 y_1(x,t) \eta_1(x,t) \big] =  y_2(x,T)\not\equiv 0.
\end{equation}

On the other hand, since $\eta_2=0$ and $\eta_{2, t}+\eta_{2, xx}=y_2+2y_1\eta_1$ in $Q$,  we obtain that
$$
y_2(x, t)+2y_1(x, t)\eta_1(x, t)=0 \quad \mbox{ in }\ Q,
$$
which contradicts $(\ref{AA})$.
\end{remark}

\smallskip

Furthermore,    if the order in the cost functional $J_1$ is greater than quadratic,  similar results still hold.
To this aim, we consider the following cost functionals  on $\mathcal{U}_{ad}$ for $\hat{p}>2$:
$$
\widehat{J}_1(u(\cdot))=\frac{\alpha}{\hat{p}}\displaystyle\int_Q |y(x, t)-y_d(x, t)|^{\hat{p}} dxdt+
\frac{\beta}{2}\displaystyle\int^T_0\int_{\omega} |u(x, t)|^{2} dxdt,
$$
and
$$
\widehat{J}_2(v(\cdot))=\frac{\alpha}{\hat{p}}\displaystyle\int_Q |w(x, t)-y_d(x, t)|^{\hat{p}}  dxdt+
\frac{\beta}{2}\displaystyle\int^T_0\int_{\omega} |v(x, t)|^2 dxdt,
$$
where  $y$ and $w$ are the  solutions to (\ref{aeq11}) and (\ref{aeq1*}) associated to $u$ and $v$,
respectively.
At this moment, we consider the following optimal control problems:
$$
{\bf (\widehat{P}_1)} \quad \min\limits_{u(\cdot)\in \mathcal{U}_{ad}} \widehat{J}_1(u(\cdot))
\quad\quad\mbox{ and }\quad\quad
{\bf (\widehat{P}_2)} \quad \min\limits_{v(\cdot)\in \mathcal{U}_{ad}} \widehat{J}_2(v(\cdot)).
$$
Then  the optimal control problem ${\bf (\widehat{P}_1)}$ has at least an optimal control $u^*$ in $\mathcal{U}_{ad}$,
 and  ${\bf (\widehat{P}_2)}$ has
a unique optimal control
 $v^*$ in $\mathcal{U}_{ad}$, that is,
$$
\widehat{J}_1(u^*)=\min\limits_{u(\cdot)\in \mathcal{U}_{ad}} \widehat{J}_1(u(\cdot))
\quad\quad\mbox{ and }\quad\quad
\widehat{J}_2(v^*)=\min\limits_{v(\cdot)\in \mathcal{U}_{ad}} \widehat{J}_2(v(\cdot)).
$$

The connection of the optimal control problem ${\bf(\widehat{P}_1)}$ and
its linear counterpart  ${\bf(\widehat{P}_2)}$ is stated as follows.

\begin{theorem}\label{th4}
Under the
assumption ${\bf(H)}$,
there exists a  constant $\rho_4\in (0, 1)$,  such that
 for any   $y_0\in W^{2, p}(\Omega)\cap H^1_0(\Omega)$
 and $y_d\in L^{\infty}(Q)$ satisfying that
 $$
 |y_0|_{W^{2, p}(\Omega)}+|y_d|_{L^\infty(Q)}\leq \rho_4,
 $$
 the following estimate holds for the optimal pairs $(u^*, y^*)$ and $(v^*, w^*)$
 of the optimal control problems ${\bf (\widehat{P}_1)}$ and ${\bf (\widehat{P}_2)}$:
$$
|u^*-v^*|_{L^p(\omega\times(0, T))}+
|y^*-w^*|_{W^{2,1}_p(Q)}\leq
C\big(|y_d|^{2}_{L^\infty(Q)}+|y_0|^2_{W^{2, p}(\Omega)}\big),
$$
with $\rho_1$ in $\mathcal{U}_{ad}$ being replaced by $\rho_4$.
\end{theorem}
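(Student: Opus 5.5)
Our plan is to run the argument of Theorem \ref{th3}, with one change: there the interior-point property was assumed, and here it will be proved for $\hat p>2$ once the data are small. The reason is that the tracking term $\frac{\alpha}{\hat p}|y-y_d|^{\hat p}$ has a derivative of size $|y-y_d|^{\hat p-1}$. This is of order $\delta^{\hat p-1}$, where $\delta:=|y_d|_{L^\infty(Q)}+|y_0|_{W^{2,p}(\Omega)}$, and it is small relative to $\delta$.

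\textbf{Step 1: a priori smallness and interiority.} Compare with the control $u=0$. Then $\frac{\beta}{2}|u^*|^2_{L^2}\le \widehat J_1(0)\le C\delta^{\hat p}$, and likewise for $v^*$. Using (\ref{App1}), $y^*$ and $w^*$ are bounded in $C^{1,0}(\overline Q)$ by $C(\rho_4+\delta)$, and the tracking residual satisfies $|y^*-y_d|_{L^\infty}\le C\rho_4$. We then write the first-order optimality system for $(\widehat{P}_1)$. The adjoint equation is
$$\eta_t+\sum_{i,j}(a^{ij}(y^*)\eta_{x_i})_{x_j}-\sum_{i,j}(a^{ij})'(y^*)y^*_{x_i}\eta_{x_j}-f'(y^*)\eta=\alpha|y^*-y_d|^{\hat p-2}(y^*-y_d),$$
with $\eta(T)=0$ (up to sign conventions), together with the variational inequality $\int(\beta u^*-\chi_\omega\eta)(u-u^*)\ge0$ on $\mathcal U_{ad}$. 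The $L^p$-theory gives $|\eta|_{W^{2,1}_p}\le C|y^*-y_d|^{\hat p-1}_{L^\infty}$. The projection formula then yields $|u^*|_{L^p}\le \beta^{-1}C(\delta+C\rho_4)^{\hat p-2}\,\delta$, after using the bound on $|y^*-y_d|$ by $C\delta$ (obtained via a bootstrap once $u^*$ is small). For $\rho_4$ small this is strictly less than $\rho_4$, so $u^*$ and $v^*$ are interior. This gives the exact identities $u^*=\beta^{-1}\chi_\omega\eta$ and $v^*=\beta^{-1}\chi_\omega z$, and also $|u^*|_{L^p}+|v^*|_{L^p}\le C\delta$ and $|y^*|_{W^{2,1}_p}+|w^*|_{W^{2,1}_p}\le C\delta$.

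\textbf{Step 2: difference estimates.} Split $y^*-w^*=(y^*-\tilde w)+(\tilde w-w^*)$, where $\tilde w$ solves (\ref{aeq1*}) with control $u^*$. Proposition \ref{prop1} gives $|y^*-\tilde w|_{W^{2,1}_p}\le C\delta^2$, and $|\tilde w-w^*|_{W^{2,1}_p}\le C|u^*-v^*|_{L^p}$. For the adjoints, $\eta-z$ solves a backward heat equation whose right-hand side has two parts. The first is the quasi-linear perturbation terms $(a^{ij}(y^*)-a^{ij}(0))\eta$, $(a^{ij})'y^*_x\eta_x$ and $f'(y^*)\eta$. Since $f'(0)=0$ and all factors are of order $\delta$, these are $O(\delta\,|\eta|_{W^{2,1}_p})$. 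The second is $g(y^*-y_d)-g(w^*-y_d)$ with $g(s)=\alpha|s|^{\hat p-2}s$. Because $\hat p>2$, $g$ is locally Lipschitz with constant $C|s|^{\hat p-2}\le C\rho_4^{\hat p-2}$ on the relevant range. Hence
$$|\eta-z|_{W^{2,1}_p}\le C\delta^2+C\rho_4^{\hat p-2}|y^*-w^*|_{L^\infty}.$$

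\textbf{Step 3: closing the estimate.} Combine $u^*-v^*=\beta^{-1}\chi_\omega(\eta-z)$ with the embedding $W^{2,1}_p\hookrightarrow L^\infty$ for $p>n+2$. This gives
$$|u^*-v^*|_{L^p}\le C\delta^2+C\rho_4^{\hat p-2}\big(C\delta^2+C|u^*-v^*|_{L^p}\big).$$
Choosing $\rho_4$ small, the last term is absorbed, so $|u^*-v^*|_{L^p}\le C\delta^2$, and Step 2 then gives the bound on $y^*-w^*$. Since $\delta^2\le 2(|y_d|^2_{L^\infty}+|y_0|^2_{W^{2,p}})$, this is the asserted estimate.

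The main obstacle is Step 1, where the smallness of $u^*$ and the smallness of $y^*-y_d$ are coupled and must be bootstrapped consistently. The absorption in Step 3 also needs care, because the Lipschitz constant of $g$ is controlled only by $\rho_4^{\hat p-2}$ and not by $\delta$. This is exactly why $\rho_4$ replaces $\rho_1$ in $\mathcal U_{ad}$.
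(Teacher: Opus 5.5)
Your proposal is correct and follows essentially the paper's route. You exclude the boundary case because $|\eta|_{W^{2,1}_p(Q)}\le C\rho_4^{\hat p-1}$, which is far below $\beta\rho_4$ for small $\rho_4$; you phrase this through the variational inequality rather than the paper's contradiction with (\ref{lwz8}). You then close the estimate for $(y^*-w^*,\eta-z)$ using $|F_3|\le C\rho_4^{\hat p-2}|y^*-w^*|$ and absorption for small $\rho_4$. The only differences are cosmetic: you bound the state difference via Proposition \ref{prop1} and the intermediate $\tilde w$ instead of the source term $F_1$, and you absorb through $|u^*-v^*|_{L^p}$ rather than through $|Y|_{W^{2,1}_p}$.
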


\begin{remark}
By virtue of similar arguments in this paper, we can study the approximation of optimal control problems
between the following general quasi-linear parabolic equation and the linear counterpart:
$$
y_t-\sum\limits_{i,j=1}^{n}(a^{i j}(y, \nabla y)y_{x_i})_{x_j}+f(y, \nabla y)=\chi_{\omega} u.
$$
Further, the  approximation relationship on optimal control problems between quasi-linear hyperbolic and
linear wave equations can also be proved.
These results will be given in a forthcoming paper.
\end{remark}

The rest of this paper is organized as follows.
Section \ref{chapter2} presents verification
 criteria for determining whether an optimal control lies in the interior or on the boundary of the admissible control set. Section \ref{chapter3}  analyzes the relationship between the optimal
 control problems governed by the quasi-linear and linear parabolic equations.
 Section  \ref{chapter4} discusses two cases in which the approximation error
  is of order two. Finally, the Appendix proves the existence of optimal controls,
   and establishes the approximation result between the quasi-linear and linear
   parabolic equations with the same control and initial datum.

\section{Proof of Proposition \ref{APR}}\label{chapter2}

In this section, we prove  a verification condition   ensuring
  the optimal control to be an interior point or a boundary point of
 $\mathcal{U}_{ad}$.

 \smallskip

\noindent {\bf Proof of Proposition \ref{APR}. }  The whole proof is divided into three parts.

\smallskip

{\bf Step 1. } First,  we give the necessary conditions satisfied by the optimal controls
$\overline{u}$ and $\overline{v}$ for the problems ${\bf (P_1)}$ and ${\bf (P_2)}$.

\smallskip

For
the optimal control $\overline{u}\in \mathcal{U}_{ad}$ of the problem ${\bf (P_1)}$,
 any $u\in \mathcal{U}_{ad}$ and $\varepsilon\in(0, 1)$, set
$u_\varepsilon=\overline{u}+\varepsilon(u-\overline{u})$. Then
$u_\varepsilon\in \mathcal{U}_{ad}$ and
\begin{equation}\label{c1}
\lim\limits_{\varepsilon\rightarrow 0}
\frac{J_1(u_\varepsilon)-J_1(\overline{u})}{\varepsilon}=
\alpha\int_Q (\overline{y}-y_d)\xi dxdt+
\beta\int_0^T\int_{\omega} \overline{u}(u-\overline{u}) dxdt\geq 0,
\end{equation}
where $\xi\in L^2(0, T; H^1_0(\Omega))\cap
C([0, T]; L^2(\Omega))$ satisfies the following linear parabolic equation:
\begin{eqnarray}\label{c2}
	\left\{
	\begin{array}{ll}
		\xi_t-\sum\limits_{i, j=1}^{n}\big(a^{ij}(\overline{y})\xi_{x_i}\big)_{x_j}-
		\sum\limits_{i, j=1}^{n}\big[(a^{i j})'(\overline{y})\overline{y}_{x_i}\xi\big]_{x_j}+f'(\overline{y})\xi=\chi_{\omega}(u-\overline{u})	
		&\mbox{ in }Q, \\ \ns
		\xi=0 &\mbox{ on }\Sigma, \\ \ns
		\xi(x, 0)=0 &\mbox{ in }\Omega. \\ \ns
	\end{array}
	\right.
\end{eqnarray}

Next, introduce the following linear parabolic equation:
\begin{eqnarray}\label{c3}
	\left\{
	\begin{array}{ll}
		\eta_t+\sum\limits_{i, j=1}^{n}\big(a^{ij}(\overline{y})\eta_{x_i}\big)_{x_j}-
		\sum\limits_{i, j=1}^{n} (a^{i j})'(\overline{y})\overline{y}_{x_i}\eta_{x_j}-f'(\overline{y})\eta=\alpha(\overline{y}-y_d)
		&\mbox{ in }Q, \\ \ns
		\eta=0 &\mbox{ on }\Sigma, \\ \ns
		\eta(x, T)=0 &\mbox{ in }\Omega. \\ \ns
	\end{array}
	\right.
\end{eqnarray}
In (\ref{c3}),  since $\overline{y}\in W^{2, 1}_p(Q)$ and $p>n+2$,
we have that for $i, j=1, \cdots, n$,
$$
\overline{y},  a^{i j}(\overline{y})\in C^{1, 0}(\overline{Q}),  \quad
 (a^{i j})'(\overline{y})\overline{y}_{x_i}, f'(\overline{y})\in C(\overline{Q}),\quad
 \mbox{and} \quad  \overline{y}-y_d
	 \in L^p(Q).
$$
Hence, (\ref{c3}) admits a unique  strong solution
$\eta\in W^{2,1}_p(Q)$ (see \cite{Lady}).
Moreover,
\begin{equation}\label{new1}
|\eta|_{W^{2, 1}_p(Q)}\leq C_1\alpha |\overline{y}-y_d|_{L^p(Q)},
\end{equation}
where $C_1$ denotes a positive constant, depending only on
$n$, $T$, $\Omega$,  $\omega$, $p$, $a^{i j}$ and $f$.
 Multiplying both sides of  the first equation in (\ref{c2}) by $\eta$ and integrating it
on $Q$, we obtain that
$$
\alpha\int_Q (\overline{y}-y_d) \xi dxdt+
\int_0^T\int_{\omega} (u-\overline{u})\eta dxdt=0.
$$
This, together with (\ref{c1}), indicates that
\begin{equation}\label{c5}
\int^T_0\int_\omega (\beta\overline{u}-\eta)(u-\overline{u})
dxdt\geq 0,
\quad \forall \ u\in \mathcal{U}_{ad}.
\end{equation}

On the other hand, using the similar arguments above  for the optimal control problem ${\bf (P_2)}$ and recalling that $(\overline{v}, \overline{w})$ is the unique optimal pair,  we get that
\begin{equation}\label{PP1}
\displaystyle\int^T_0\int_\omega
(\beta\overline{v}-z)(u-\overline{v})dxdt\geq 0,
\quad \forall \ u\in \mathcal{U}_{ad},
\end{equation}
where $z$ satisfies the linear parabolic equation:
\begin{eqnarray}\label{c7}
	\left\{
	\begin{array}{ll}
		z_t+\Delta z=\alpha (\overline{w}-y_d)	
		&\mbox{ in }Q, \\ \ns
		z=0 &\mbox{ on }\Sigma, \\ \ns
		z(x, T)=0 &\mbox{ in }\Omega. \\ \ns
	\end{array}
	\right.
\end{eqnarray}
Similar to  (\ref{new1}),  we also have that
\begin{equation}\label{new1*}
|z|_{W^{2, 1}_p(Q)}\leq C_1\alpha |\overline{w}-y_d|_{L^p(Q)}.
\end{equation}

\smallskip

{\bf Step 2. } In this step,  we give the expressions of the optimal controls
$\overline{u}$ and $\overline{v}$.

\smallskip

{\bf Case 1. }  Assume that $|\overline{u}|_{L^p(\omega\times(0, T))}<\rho_1$
and $|\overline{v}|_{L^p(\omega\times(0, T))}<\rho_1$, i.e.,
$\overline{u}$ and $\overline{v}$ are two  interior points of $\mathcal{U}_{ad}$.

\medskip

In this case,  by (\ref{c5}) and (\ref{PP1}), it follows that
\begin{equation}\label{Add}
\displaystyle \overline{u}=\chi_\omega \beta^{-1} \eta\ \ \quad
\mbox{ and } \ \  \quad \overline{v}=\chi_\omega \beta^{-1} z
\quad\mbox{ in }\ Q.
\end{equation}

\medskip

{\bf Case 2. }  Assume that $|\overline{u}|_{L^p(\omega\times(0, T))}=\rho_1$
and $|\overline{v}|_{L^p(\omega\times(0, T))}=\rho_1$, i.e.,
$\overline{u}$ and $\overline{v}$ are   boundary points of $\mathcal{U}_{ad}$.
By (\ref{c5}),
\begin{equation}\label{lwz1}
\displaystyle\int^T_0\int_\omega  (\beta\overline{u}-\eta)u
dxdt\geq
\int^T_0\int_\omega  (\beta\overline{u}-\eta)\overline{u}dxdt,
\ \ \forall \ u\in L^p(\omega\times(0, T)) \mbox{ with } |u|_{L^p(\omega\times(0, T))}
\leq\rho_1.
\end{equation}
Notice that for $q=\displaystyle\frac{p}{p-1}$ $(\mbox{i.e.},\ \displaystyle\frac{1}{p}+\frac{1}{q}=1)$,
$$
|\beta\overline{u}-\eta|_{L^q(\omega\times(0, T))}
=\sup\limits_{|v|_{L^p(\omega\times(0, T))}=1}
\Big|\displaystyle\int^T_0\int_\omega  (\beta\overline{u}-\eta)v
dxdt\Big|.
$$
Hence, for any $v\in L^p(\omega\times(0, T))$ with
$|v|_{L^p(\omega\times(0, T))}=1$,
$$
\displaystyle\int^T_0\int_\omega  (\beta\overline{u}-\eta)v
dxdt\geq -|\beta\overline{u}-\eta|_{L^q(\omega\times(0, T))}.
$$
Moreover, there exists a $v^*_1\in L^p(\omega\times(0, T))$ with
$|v^*_1|_{L^p(\omega\times(0, T))}=1$, such that
$$
\displaystyle\int^T_0\int_\omega  (\beta\overline{u}-\eta)v_1^*
dxdt= -|\beta\overline{u}-\eta|_{L^q(\omega\times(0, T))}.
$$
Indeed,  $v^*_1$ can be chosen as
$$
v^*_1(x, t)=
\left\{
\begin{array}{lll}
&\displaystyle\frac{-1}{|\beta\overline{u}-\eta|^{q-1}_{L^q(\omega\times(0, T))}}\cdot
\frac{\beta\overline{u}(x, t)-\eta(x, t)}{|\beta\overline{u}(x, t)-\eta(x, t)|^{2-q}}, &\mbox{ if }\  \beta\overline{u}(x, t)\neq\eta(x, t);\\[6mm]
&\displaystyle 0, &\mbox{ if }\ \beta\overline{u}(x, t)=\eta(x, t).
\end{array}
\right.
$$

This, together with (\ref{lwz1}), indicates that the infimum of the functional  $\tilde{J}(v(\cdot))=\displaystyle\int^T_0\int_\omega (\beta\overline{u}-\eta)v
dxdt$ can be attained on the unit sphere of $L^p(\omega\times(0, T))$, and therefore,
\begin{equation}\label{lwz2}
-|\beta\overline{u}-\eta|_{L^q(\omega\times(0, T))}
=\int^T_0\int_\omega (\beta\overline{u}-\eta)\frac{\overline{u}}{\rho_1}dxdt.
\end{equation}

It follows that
$$\displaystyle |\beta\overline{u}-\eta|_{L^q(\omega\times(0, T))}
=\Big|\int^T_0\int_\omega(\beta\overline{u}-\eta)\frac{\overline{u}}{\rho_1}dxdt\Big|
\leq |\beta\overline{u}-\eta|_{L^q(\omega\times(0, T))}
\Big|\frac{\overline{u}}{\rho_1}\Big|_{L^p(\omega\times(0, T))}
\!=|\beta\overline{u}-\eta|_{L^q(\omega\times(0, T))}.
$$
Hence,
$$\Big|\int^T_0\int_\omega(\beta\overline{u}-\eta)\frac{\overline{u}}{\rho_1}dxdt\Big|
= |\beta\overline{u}-\eta|_{L^q(\omega\times(0, T))}
\Big|\frac{\overline{u}}{\rho_1}\Big|_{L^p(\omega\times(0, T))}.
$$
This means that the equal sign in the H\"older inequality holds, which is equivalent to the following fact:
$$
\beta\overline{u}-\eta=\lambda \Big|\frac{\overline{u}}{\rho_1}\Big|^{p-2}
\frac{\overline{u}}{\rho_1}\quad\quad\mbox{a.e. \ \ in } \ \omega\times(0, T),
$$
where $\lambda$ is a constant.
By (\ref{lwz2}), we obtain that
\begin{eqnarray*}
&&-|\beta\overline{u}-\eta|_{L^q(\omega\times(0, T))}=
\displaystyle\int^T_0\int_{\omega}
\lambda \Big|\frac{\overline{u}}{\rho_1}\Big|^{p}dxdt=\lambda,
\end{eqnarray*}
and therefore,
$$
\beta\overline{u}-\eta=-|\beta\overline{u}-\eta|_{L^q(\omega\times(0, T))} \Big|\frac{\overline{u}}{\rho_1}\Big|^{p-2}
\frac{\overline{u}}{\rho_1}\quad\quad\mbox{a.e. \ \ in } \ \omega\times(0, T).
$$
This indicates that
\begin{equation}\label{lwz4}
\eta=\beta\overline{u}+|\beta\overline{u}-\eta|_{L^q(\omega\times(0, T))} \Big|\frac{\overline{u}}{\rho_1}\Big|^{p-2}
\frac{\overline{u}}{\rho_1}\quad\quad\mbox{a.e. \ \ in } \ \omega\times(0, T).
\end{equation}

Similarly, we also have that
\begin{equation}\label{lwz4**}
z=\beta\overline{v}+|\beta\overline{v}-z|_{L^q(\omega\times(0, T))} \Big|\frac{\overline{v}}{\rho_1}\Big|^{p-2}
\frac{\overline{v}}{\rho_1}\quad\quad\mbox{a.e. \ \ in } \ \omega\times(0, T).
\end{equation}

{\bf Step 3. }  Suppose that $\overline{u}$ is   the boundary point of  $\mathcal{U}_{ad}$,   by (\ref{lwz4}),
\begin{equation}\label{lwz8}
|\eta|_{L^p(\omega\times(0, T))}\geq\beta|\overline{u}|_{L^p(\omega\times(0, T))}
=\beta\rho_1.
\end{equation}
Further, by (\ref{new1}) and (\ref{App1}),  it is easy to show that
\begin{eqnarray*}
&&|\eta|_{W^{2, 1}_p(Q)}\leq C_1\alpha|\overline{y}-y_d|_{L^p(Q)}
\leq C_1\alpha \big(|\overline{u}|_{L^p(\omega\times(0, T))}+
|y_0|_{W^{2, p}(\Omega)}+
|y_d|_{L^p(Q)}\big)
\leq C_1\alpha \rho_1.
\end{eqnarray*}
Combining the above estimate with (\ref{lwz8}),  we obtain that there exists a positive constant
$\rho^*_1$,  such that
\begin{equation}\label{**1}
\beta\leq \rho_1^*\alpha.
\end{equation}
When $\displaystyle\frac{\beta}{\alpha}>\rho^*_1$,
there is a contradiction and therefore,
  $\overline{u}$ is   an interior point of  $\mathcal{U}_{ad}$.  Similar arguments still hold for
  the optimal control $\overline{v}$.

  \medskip

  In the following,  we assume that either  $y_d(x, t)\geq d_0$ or $y_d(x, t)\leq -d_0$  in $Q$ for some positive
   constant $d_0$ and
  $\overline{u}$ is   an interior point of  $\mathcal{U}_{ad}$.
   First,
by (\ref{App1}),
$$\displaystyle
|\overline{y}|_{C^{1, 0}(\overline{Q})}\leq
 C_1(|\overline{u}|_{L^p(\omega\times(0, T))}+|y_0|_{W^{2, p}(\Omega)})\leq C_1\rho_1.
$$
For a sufficiently small $\rho_1$, we have that
$\displaystyle
|\overline{y}|_{C^{1, 0}(\overline{Q})}\leq
 \frac{d_0}{2}.
$

\smallskip

{\bf Case 1) } Assume that   $y_d(x, t)\geq d_0$ in $Q$.
 It follows that $\displaystyle y_d(x, t)-\overline{y}(x, t)\geq  \frac{d_0}{2}
$ in $Q$.

 \smallskip

 In this case,  set $\tilde\eta(x, t)=\eta(x, T-t)$ and it satisfies the equation:
 \begin{eqnarray}\label{forwardc3}
	\left\{
	\begin{array}{ll}
		\tilde{\eta}_t-\sum\limits_{i, j=1}^{n}\big(a^{ij}(\overline{y}(x, T\!-\!t))\tilde{\eta}_{x_i}\big)_{x_j}+
		\sum\limits_{i, j=1}^{n} (a^{i j})'(\overline{y}(x, T\!-\!t))\overline{y}_{x_i}(x, T-t)\tilde{\eta}_{x_j}
		 &\\ \ns
		\quad\quad\quad\quad\quad\quad\quad\quad=-f'(\overline{y}(x, T\!-\!t))\tilde{\eta}+\alpha\big[y_d(x, T\!-\!t)-
		\overline{y}(x, T\!-\!t)\big]
		&\mbox{in }Q, \\ \ns
		\tilde{\eta}=0 &\mbox{on }\Sigma, \\ \ns
		\tilde{\eta}(x, 0)=0 &\mbox{in }\Omega. \\ \ns
	\end{array}
	\right.
\end{eqnarray}
 We construct a sub-solution $\phi(x, t)=\alpha M t\tilde{\psi}(x)$ to the equation (\ref{forwardc3}), where
 $\tilde{\psi}(\cdot)\in C^2(\Omega)$, $\tilde{\psi}=0$ on $\Gamma$,
 $\tilde{\psi}>0$ in $\Omega$, and $M$ is an undetermined positive constant.
 Indeed, it is easy to check that $\phi$ satisfies the equation:
 \begin{eqnarray*}
	\left\{
	\begin{array}{ll}
		\phi_t-\sum\limits_{i, j=1}^{n}\big(a^{ij}(\overline{y}(x, T\!-\!t))\phi_{x_i}\big)_{x_j}+
		\sum\limits_{i, j=1}^{n} (a^{i j})'(\overline{y}(x, T\!-\!t))\overline{y}_{x_i}(x, T-t)\phi_{x_j}
		+f'(\overline{y}(x, T\!-\!t))\phi &\\ \ns
		\quad=\alpha M\tilde{\psi}-\alpha Mt\Big[
		\sum\limits_{i, j=1}^{n}\big(a^{ij}(\overline{y}(x, T\!-\!t))\tilde{\psi}_{x_i}\big)_{x_j}&\\ \ns
		\quad\quad-
		\sum\limits_{i, j=1}^{n} (a^{i j})'(\overline{y}(x, T\!-\!t))\overline{y}_{x_i}(x, T-t)\tilde{\psi}_{x_j}
		-f'(\overline{y}(x, T\!-\!t))\tilde{\psi}
		\Big]
		&\mbox{in }Q, \\ \ns
		\phi=0 &\mbox{on }\Sigma, \\ \ns
		\phi(x, 0)=0 &\mbox{in }\Omega. \\ \ns
	\end{array}
	\right.
\end{eqnarray*}
Notice that
\begin{eqnarray*}
&&\alpha M\tilde{\psi}-\alpha Mt\Big[
		\sum\limits_{i, j=1}^{n}\big(a^{ij}(\overline{y}(x, T\!-\!t))\tilde{\psi}_{x_i}\big)_{x_j}\\[-2mm]
		&&-
		\sum\limits_{i, j=1}^{n} (a^{i j})'(\overline{y}(x, T\!-\!t))\overline{y}_{x_i}(x, T-t)\tilde{\psi}_{x_j}
		-f'(\overline{y}(x, T\!-\!t))\tilde{\psi}
		\Big]\\
		&&\leq C_1\alpha M+C_1\alpha MT,
\end{eqnarray*}
where  $C_1$  is a positive constant depending only on $a^{i j}$, $f$, $d_0$ and $\tilde{\psi}$, since
$\displaystyle
|\overline{y}|_{C^{1, 0}(\overline{Q})}\leq
 \frac{d_0}{2}.
$
Hence, there exists a sufficiently small positive constant $M$, depending only on $a^{i j}$, $f$, $d_0$, $\tilde{\psi}$ and $T$, such that
\begin{eqnarray*}
&&\displaystyle \alpha M\tilde{\psi}-\alpha Mt\Big[
		\sum\limits_{i, j=1}^{n}\big(a^{ij}(\overline{y}(x, T\!-\!t))\tilde{\psi}_{x_i}\big)_{x_j}\\
		&&-
		\sum\limits_{i, j=1}^{n} (a^{i j})'(\overline{y}(x, T\!-\!t))\overline{y}_{x_i}(x, T-t)\tilde{\psi}_{x_j}
		\!-\! f'(\overline{y}(x, T\!-\!t))\tilde{\psi}
		\Big]\leq \frac{\alpha d_0}{2}.
\end{eqnarray*}
By the comparison principle of linear parabolic equations,  $\phi\leq \tilde{\eta}$ in $Q$, i.e.,
$$\eta(x, t)\geq  \alpha M (T-t)\tilde{\psi}(x)\quad \mbox{ in }\ \ Q.
$$
This implies that
$$
|\eta|_{L^2(\omega\times(0, T))}\geq
|\alpha M (T-\cdot)\tilde{\psi}(\cdot)|_{L^2(\omega\times(0, T))}
= \displaystyle\frac{\alpha MT^{3/2}}{\sqrt{3}}|\tilde{\psi}|_{L^2(\omega)}.
$$

  On the other hand, since $\overline{u}$ is   an interior point of  $\mathcal{U}_{ad}$,
  $$
  |\eta|_{L^2(\omega\times(0, T))} =\beta |\overline{u}|_{L^2(\omega\times(0, T))} <\beta\rho_1<\beta.
  $$
  Hence,
  $$
  \displaystyle\frac{\beta}{\alpha}>\rho^*_2= \frac{M T^{3/2}}{\sqrt{3}}|\tilde{\psi}|_{L^2(\omega)}.
  $$
  When $\displaystyle\frac{\beta}{\alpha}\leq \rho^*_2$, there is a contradiction and therefore,
  $\overline{u}$ is   a boundary point of  $\mathcal{U}_{ad}$.

  \smallskip

  {\bf Case 2) } Assume that   $y_d(x, t)\leq -d_0$ in $Q$.
 It follows that $\displaystyle y_d(x, t)-\overline{y}(x, t)\leq - \frac{d_0}{2}
$ in $Q$.

 \smallskip

 In this case,  we can  construct a super-solution to  the equation (\ref{forwardc3}) and get the desired result
  similarly.

  Further, the above arguments still hold for the optimal control $\overline{v}$.
   \endpf

  \section{Proof of Theorem \ref{th2}}\label{chapter3}

  \noindent{\bf Proof of Theorem \ref{th2}. }  First,  since
  $\overline{u}$ and $\overline{v}$
are optimal controls, respectively,  for the optimal control problems ${\bf (P_1)}$
and  ${\bf (P_2)}$, we have that
$$
J_1(\overline{u})\leq J_1(0)\quad\mbox{ and }\quad J_2(\overline{v})\leq J_2(0).
$$
  Hence,  by  (\ref{App1}),
  \begin{eqnarray}\label{Add1*}
  \begin{array}{ll}
  &\displaystyle|\overline{u}|_{L^2(\omega\times(0, T))}+|\overline{v}|_{L^2(\omega\times(0, T))}
  +|\overline{y}-y_d|_{L^2(Q)}+|\overline{w}-y_d|_{L^2(Q)}\\[3mm]
  &\displaystyle\leq C\big(|y_d|_{L^2(Q)}+|y_0|_{W^{2, p}(\Omega)}\big).
  \end{array}
  \end{eqnarray}
  Further,  by the estimate of strong solutions and (\ref{Add1*}),
   \begin{eqnarray}\label{Add2*}
  \begin{array}{ll}
  &\displaystyle|\overline{y}|_{W^{2, 1}_2(Q)}+|\overline{w}|_{W^{2, 1}_2(Q)}
  \leq
  C\big(
  |\overline{u}|_{L^2(\omega\times(0, T))}+|\overline{v}|_{L^2(\omega\times(0, T))}+
  |y_0|_{H^{2}(\Omega)}
  \big)\\[3mm]
  &\displaystyle \leq C\big(|y_d|_{L^2(Q)}+|y_0|_{W^{2, p}(\Omega)}\big),
  \end{array}
  \end{eqnarray}
  and
  \begin{eqnarray}\label{Add3*}
  \begin{array}{ll}
  &\displaystyle|\eta|_{W^{2, 1}_2(Q)}+|z|_{W^{2, 1}_2(Q)}
  \leq
  C\big(|\overline{y}-y_d|_{L^2(Q)}+|\overline{w}-y_d|_{L^2(Q)}
  \big)\\[3mm]
  &\displaystyle \leq C\big(|y_d|_{L^2(Q)}+|y_0|_{W^{2, p}(\Omega)}\big).
  \end{array}
  \end{eqnarray}

  Since $|\overline{u}|_{L^p(\omega\times(0, T))}\leq \rho_1$ and
  $|\overline{v}|_{L^2(\omega\times(0, T))}\leq \rho_1$,
  we choose $r^*\in (n+2, p)$, and  by the interpolation inequality and (\ref{Add1*}), for a positive constant
  $\theta\in (0, 1)$,
  \begin{eqnarray*}
 &&|\overline{u}|_{L^{r^*}(\omega\times(0, T))}+|\overline{v}|_{L^{r^*}(\omega\times(0, T))}
 \leq |\overline{u}|^\theta_{L^{2}(\omega\times(0, T))}
 |\overline{u}|^{1-\theta}_{L^{p}(\omega\times(0, T))}+
 |\overline{v}|^\theta_{L^{2}(\omega\times(0, T))}
 |\overline{v}|^{1-\theta}_{L^{p}(\omega\times(0, T))}\\[1mm]
 &&\leq C\rho_1^{1-\theta} \big(|y_d|_{L^2(Q)}+|y_0|_{W^{2, p}(\Omega)}\big)^\theta\leq
 C\big(|y_d|_{L^2(Q)}+|y_0|_{W^{2, p}(\Omega)}\big)^\theta.
  \end{eqnarray*}
  Since $\rho_1<1$, this implies that
  \begin{eqnarray*}
  \begin{array}{ll}
  &\displaystyle |\overline{y}|_{W^{2, 1}_{r^*}(Q)}+|\overline{w}|_{W^{2, 1}_{r^*}(Q)}
  +|\eta|_{W^{2, 1}_{r^*}(Q)}+|z|_{W^{2, 1}_{r^*}(Q)}\\[2mm]
  &\displaystyle \leq
  C \big(|y_d|_{L^2(Q)}+|y_0|_{W^{2, p}(\Omega)}\big)^\theta+
  C|y_0|_{W^{2, r^*}(\Omega)}+
  C \big(|y_d|_{L^p(Q)}+|y_0|_{W^{2, p}(\Omega)}\big)^\theta\\[2mm]
  &\displaystyle \leq C \big(|y_d|_{L^p(Q)}+|y_0|_{W^{2, p}(\Omega)}\big)^\theta.
  \end{array}
  \end{eqnarray*}
  Therefore,
  \begin{eqnarray}\label{Add4*}
  \begin{array}{ll}
  &\displaystyle |\overline{y}|_{C^{1, 0}(\overline{Q})}+
  |\overline{w}|_{C^{1, 0}(\overline{Q})}+
  |\eta|_{C^{1, 0}(\overline{Q})}+|z|_{C^{1, 0}(\overline{Q})}\displaystyle
  \leq C \big(|y_d|_{L^p(Q)}+|y_0|_{W^{2, p}(\Omega)}\big)^\theta.
  \end{array}
  \end{eqnarray}

  Set $Y=\overline{y}-\overline{w}$ and $W=\eta-z$. Then,  $(Y, W)$ satisfies
  \begin{eqnarray}\label{c11****??}
	\left\{
	\begin{array}{ll}
		Y_t-\Delta Y=F_1
		+\chi_{\omega}(\overline{u}-\overline{v})&\mbox{ in }Q, \\ \ns
	W_t+\Delta W=F_2+
	\alpha Y			&\mbox{ in }Q, \\ \ns	
		Y=W=0 &\mbox{ on }\Sigma, \\ \ns
		Y(x, 0)=0,\  W(x, T)=0 &\mbox{ in }\Omega,
	\end{array}
	\right.
\end{eqnarray}
  where
  $$
  F_1=\sum\limits_{i,j=1}^{n}(\widetilde{a}^{i j}(\overline{y})\overline{y}_{x_i})_{x_j}
		-f(\overline{y}),$$
  and
  $$
  F_2=-\sum\limits_{i, j=1}^{n}\big(\widetilde{a}^{ij}(\overline{y})\eta_{x_i}\big)_{x_j}+
		\sum\limits_{i, j=1}^{n} (a^{i j})'(\overline{y})\overline{y}_{x_i}\eta_{x_j}
		+f'(\overline{y})\eta.
  $$
 By  (\ref{Add2*}),   (\ref{Add3*}) and  (\ref{Add4*}),  it is easy to show that
  \begin{eqnarray}\label{Add5*}
  \begin{array}{rl}
 &|F_1|_{L^2(Q)}+ |F_2|_{L^2(Q)}\\[2mm]
 &\leq C|\overline{y}|_{C^{1, 0}(\overline{Q})}|\overline{y}|_{W^{2, 1}_2(Q)}
 +C|\overline{y}|_{C^{1, 0}(\overline{Q})}|\eta|_{W^{2, 1}_2(Q)}
 \leq C\big(|y_d|_{L^p(Q)}+|y_0|_{W^{2, p}(\Omega)}\big)^{1+\theta}.
  \end{array}\end{eqnarray}

  On the other hand,  choose
  $u=\overline{v}$ in (\ref{c5}) and $u=\overline{u}$ in (\ref{PP1}). We obtain that
  \begin{equation}\label{Add6*}
  \beta |\overline{u}-\overline{v}|^2_{L^2(\omega\times(0, T))}
  \leq\int^T_0\int_{\omega}(\eta-z)(\overline{u}-\overline{v})dxdt.
  \end{equation}
 Multiplying the first equation of  (\ref{c11****??})
  by $W$
 and the second equation by $Y$,  integrating on $Q$, and using $Y(\cdot, 0)=
 W(\cdot, T)=0$,  we have that
 for any $\varepsilon>0$,
 \begin{eqnarray*}
 &&\alpha|Y|^2_{L^2(Q)}+\int^T_0\int_{\omega}(\eta-z)(\overline{u}-\overline{v})dxdt
 \leq \int_Q |F_1 W|dxdt+\int_Q |F_2 Y|dxdt\\[2mm]
 &&\leq \varepsilon |Y|^2_{L^2(Q)}
+\varepsilon |W|^2_{L^2(Q)}
+C|F_1|^2_{L^2(Q)}+ C|F_2|^2_{L^2(Q)}\\[2mm]
&&\leq \varepsilon |Y|^2_{L^2(Q)}
+\varepsilon C(|F_2|^2_{L^2(Q)}+\alpha|Y|^2_{L^2(Q)})
+C|F_1|^2_{L^2(Q)}+ C|F_2|^2_{L^2(Q)}.
 \end{eqnarray*}
 When $\varepsilon$ is sufficiently small,  combining the above estimate with (\ref{Add6*})
 and (\ref{Add5*}), we get that
 \begin{eqnarray*}
 &&|Y|_{L^2(Q)}+|\overline{u}-\overline{v}|_{L^2(\omega\times(0, T))}
 \leq C|F_1|_{L^2(Q)}+ C|F_2|_{L^2(Q)}\leq
 C\big(|y_d|_{L^p(Q)}+|y_0|_{W^{2, p}(\Omega)}\big)^{1+\theta}.
 \end{eqnarray*}
 Then,  by the estimate of strong solutions to (\ref{c11****??}),
 \begin{eqnarray*}
 &&|\overline{y}-\overline{w}|_{W^{2, 1}_2(Q)}=|Y|_{W^{2, 1}_2(Q)}\leq
 C|F_1|_{L^2(Q)}+ C|\overline{u}-\overline{v}|_{L^2(\omega\times(0, T))}\\[2mm]
 &&\leq C\big(|y_d|_{L^p(Q)}+|y_0|_{W^{2, p}(\Omega)}\big)^{1+\theta}.
 \end{eqnarray*}
 This finishes the proof of Theorem \ref{th2}.
 \endpf

  \section{Proofs of approximation results with  order $2$}\label{chapter4}

  First, we give a proof of Theorem \ref{th3}.

  \medskip

  \noindent{\bf Proof of Theorem \ref{th3}.} Assume that $\overline{u}$ and  $\overline{v}$
are interior points of $\mathcal{U}_{ad}$.
First, by  (\ref{Add}), we have the  optimality system for the optimal control problem ${\bf(P_1)}$:
\begin{eqnarray}\label{c9}
	\left\{
	\begin{array}{ll}
		\overline{y}_t-\sum\limits_{i,j=1}^{n}(a^{i j}(\overline{y})\overline{y}_{x_i})_{x_j}+f(\overline{y})=\chi_{\omega}\overline{u}=\chi_{\omega} \beta^{-1} \eta &\mbox{ in }Q, \\ \ns
	\eta_t+\sum\limits_{i, j=1}^{n}\big(a^{ij}(\overline{y})\eta_{x_i}\big)_{x_j}-
		\sum\limits_{i, j=1}^{n} (a^{i j})'(\overline{y})\overline{y}_{x_i}\eta_{x_j}-f'(\overline{y})\eta=\alpha (\overline{y}-y_d)		
		&\mbox{ in }Q, \\ \ns	
		\overline{y}=\eta=0 &\mbox{ on }\Sigma, \\ \ns
		\overline{y}(x, 0)=y_0(x),\  \eta(x, T)=0 &\mbox{ in }\Omega,
	\end{array}
	\right.
\end{eqnarray}
and the  optimality system for the optimal control problem ${\bf(P_2)}$:
\begin{eqnarray}\label{c10}
	\left\{
	\begin{array}{ll}
		\overline{w}_t-\Delta \overline{w}=\chi_{\omega}\overline{v}=\chi_{\omega} \beta^{-1} z &\mbox{ in }Q, \\ \ns
	z_t+\Delta z=\alpha (\overline{w}-y_d)	
		&\mbox{ in }Q, \\ \ns	
		\overline{w}=z=0 &\mbox{ on }\Sigma, \\ \ns
		\overline{w}(x, 0)=y_0(x),\  z(x, T)=0 &\mbox{ in }\Omega.
	\end{array}
	\right.
\end{eqnarray}

\medskip

Next, we give the error estimates on the optimal trajectories for the optimal control problems ${\bf (P_1)}$ and ${\bf (P_2)}$. Set $Y=\overline{y}-\overline{w}$ and $W=\eta-z$. By
(\ref{c9}) and (\ref{c10}),  $(Y, W)$ satisfies the following system:
\begin{eqnarray}\label{c11****}
	\left\{
	\begin{array}{ll}
		Y_t-\Delta Y=
		\sum\limits_{i,j=1}^{n}(\widetilde{a}^{i j}(\overline{y})\overline{y}_{x_i})_{x_j}-f(\overline{y})+\chi_{\omega} \beta^{-1} W&\mbox{ in }Q, \\ \ns
	W_t+\Delta W=
	-\sum\limits_{i, j=1}^{n}\big(\widetilde{a}^{ij}(\overline{y})\eta_{x_i}\big)_{x_j}+
		\sum\limits_{i, j=1}^{n} (a^{i j})'(\overline{y})\overline{y}_{x_i}\eta_{x_j}+f'(\overline{y})\eta+\alpha Y			&\mbox{ in }Q, \\ \ns	
		Y=W=0 &\mbox{ on }\Sigma, \\ \ns
		Y(x, 0)=0,\  W(x, T)=0 &\mbox{ in }\Omega.
	\end{array}
	\right.
\end{eqnarray}

In the following,  we estimate every  non-homogenous term
 in the right sides of the first and the second equations in (\ref{c11****}).
Since $\widetilde{a}^{i j}(0)=0$,  it follows that
\begin{eqnarray*}
&&\Big|\sum\limits_{i,j=1}^{n}(\widetilde{a}^{i j}(\overline{y})\overline{y}_{x_i})_{x_j}\Big|_{L^p(Q)}=\Big|\sum\limits_{i,j=1}^{n}
\Big[
\widetilde{a}^{i j}(\overline{y})\overline{y}_{x_i x_j}+
(\widetilde{a}^{i j})'(\overline{y})\overline{y}_{x_i}\overline{y}_{x_j}\Big]
\Big|_{L^p(Q)}\\
&&\leq C|\overline{y}|_{L^\infty(Q)}|\overline{y}|_{W^{2, 1}_p(Q)}
+C|\overline{y}|_{C^{1,0}(\overline{Q})}|\overline{y}|_{W^{2,1}_p(Q)}\leq C|\overline{y}|^2_{W^{2,1}_p(Q)}.
\end{eqnarray*}
Similarly,
\begin{eqnarray*}
&&\Big|\sum\limits_{i,j=1}^{n}(\widetilde{a}^{i j}(\overline{y})\eta_{x_i})_{x_j}\Big|_{L^p(Q)}=\Big|\sum\limits_{i,j=1}^{n}
\Big[
\widetilde{a}^{i j}(\overline{y})\eta_{x_i x_j}+
(\widetilde{a}^{i j})'(\overline{y})\eta_{x_i}\overline{y}_{x_j}\Big]
\Big|_{L^p(Q)}\\
&&\leq C|\overline{y}|_{L^\infty(Q)}|\eta|_{W^{2, 1}_p(Q)}
+C|\eta|_{C^{1,0}(\overline{Q})}|\nabla\overline{y}|_{L^p(Q)}\leq C|\eta|_{W^{2,1}_p(Q)}|\overline{y}|_{W^{2,1}_p(Q)},
\end{eqnarray*}
and
\begin{eqnarray*}
&&\Big|\sum\limits_{i,j=1}^{n}(\widetilde{a}^{i j})'
(\overline{y})\overline{y}_{x_i}\eta_{x_j}\Big|_{L^p(Q)}
\leq C|\eta|_{C^{1,0}(\overline{Q})} |\nabla\overline{y}|_{L^p(Q)}
\leq C|\eta|_{W^{2,1}_p(Q)} |\overline{y}|_{W^{2,1}_p(Q)}.
\end{eqnarray*}
Further,  since $f(0)=f'(0)=0$,  we have  that
$$
|f(\overline{y})|_{L^p(Q)}\leq C|f(\overline{y})|_{L^\infty(Q)}\leq C|\overline{y}|^2_{L^\infty(Q)}\leq C|\overline{y}|^2_{W^{2, 1}_p(Q)},
$$
and
$$
|f'(\overline{y})\eta|_{L^p(Q)}\leq |\overline{y}|_{L^\infty(Q)} |\eta|_{L^p(Q)}
\leq C|\eta|_{W^{2,1}_p(Q)}|\overline{y}|_{W^{2,1}_p(Q)}.
$$

\medskip

Furthermore,  set
\begin{eqnarray*}
&&F_1=\sum\limits_{i,j=1}^{n}(\widetilde{a}^{i j}(\overline{y})\overline{y}_{x_i})_{x_j}-f(\overline{y})
 \quad\mbox{and}\quad
F_2=-\sum\limits_{i, j=1}^{n}\big(\widetilde{a}^{ij}(\overline{y})\eta_{x_i}\big)_{x_j}+
		\sum\limits_{i, j=1}^{n} (a^{i j})'(\overline{y})\overline{y}_{x_i}\eta_{x_j}+f'(\overline{y})\eta.
\end{eqnarray*}
Then,   (\ref{c11****}) becomes the following system:
\begin{eqnarray}\label{c11*}
	\left\{
	\begin{array}{ll}
		Y_t-\Delta Y=
		F_1+\chi_{\omega} \beta^{-1} W&\mbox{ in }Q, \\ \ns
	W_t+\Delta W=
	F_2+\alpha Y
				&\mbox{ in }Q, \\ \ns	
		Y=W=0 &\mbox{ on }\Sigma, \\ \ns
		Y(x, 0)=0,\  W(x, T)=0 &\mbox{ in }\Omega,
	\end{array}
	\right.
\end{eqnarray}
where
\begin{equation}\label{q1}
|F_1|_{L^p(Q)}\leq C|\overline{y}|^2_{W^{2,1}_p(Q)}\quad\mbox{ and }\quad|F_2|_{L^p(Q)}\leq C|\eta|_{W^{2,1}_p(Q)}|\overline{y}|_{W^{2,1}_p(Q)}.
\end{equation}

\medskip

Multiplying the first and second equations in (\ref{c11*}), respectively,  by
$W$ and $Y$,  we get that for any $\varepsilon>0$,
\begin{eqnarray*}
&&\int^T_0\int_{\omega} \beta^{-1} W^2 dxdt+\int_Q \alpha Y^2 dxdt
=-\int_Q (F_1 W+F_2Y)dxdt\\[2mm]
&&\leq \varepsilon\int_Q W^2dxdt+C\int_Q F_1^2dxdt
+\varepsilon\int_Q Y^2dxdt+C\int_Q F^2_2dxdt.
\end{eqnarray*}
When $\varepsilon$ is sufficiently small, it follows that
$$
|W|_{L^2(\omega\times(0, T))}+|Y|_{L^2(Q)}
\leq C\big(|F_1|_{L^2(Q)}+|F_2|_{L^2(Q)}+\sqrt{\varepsilon}|W|_{L^2(Q)}\big).
$$
By the well-posedness results for strong solutions to (\ref{c11*}) and the above estimate,
\begin{eqnarray*}
&&|W|_{W^{2,1}_2(Q)}+|Y|_{W^{2,1}_2(Q)}
\leq C\big(|F_1|_{L^2(Q)}+|F_2|_{L^2(Q)}\big)+
C\big(|Y|_{L^2(Q)}+|W|_{L^2(\omega\times(0, T))}\big)\\[2mm]
&&\leq C\big(|F_1|_{L^2(Q)}+|F_2|_{L^2(Q)}+\sqrt{\varepsilon}|W|_{L^2(Q)}\big).
\end{eqnarray*}
When $\varepsilon$ is sufficiently small, it follows that
\begin{eqnarray*}
&&|W|_{W^{2,1}_2(Q)}+|Y|_{W^{2,1}_2(Q)}
\leq C\big(|F_1|_{L^2(Q)}+|F_2|_{L^2(Q)}\big).
\end{eqnarray*}

\medskip

By the Sobolev embedding theorem,
if $n\leq 2$,  then for any $s\geq 2$,
$$|W|_{L^s(Q)}+|Y|_{L^s(Q)}\leq C\big(|W|_{W^{2,1}_2(Q)}+|Y|_{W^{2,1}_2(Q)}\big);$$
Otherwise, for $p_1=\frac{2(n+2)}{n-2}$,
$$|W|_{L^{p_1}(Q)}+|Y|_{L^{p_1}(Q)}\leq C\big(|W|_{W^{2,1}_2(Q)}+|Y|_{W^{2,1}_2(Q)}\big)
\leq C\big(|F_1|_{L^2(Q)}+|F_2|_{L^2(Q)}\big).$$

\medskip

Further, using the well-posedness results for strong solutions to (\ref{c11*}) again,  one gets that
 \begin{eqnarray*}
&&|W|_{W^{2,1}_{p_1}(Q)}+|Y|_{W^{2,1}_{p_1}(Q)}
\leq C\big(|F_1|_{L^{p_1}(Q)}+|F_2|_{L^{p_1}(Q)}\big)+
C\big(|Y|_{L^{p_1}(Q)}+|W|_{L^{p_1}(\omega\times(0, T))}\big)\\[2mm]
&&\leq C\big(|F_1|_{L^{p_1}(Q)}+|F_2|_{L^{p_1}(Q)}\big)+C\big(|F_1|_{L^{2}(Q)}+|F_2|_{L^{2}(Q)}\big)\leq C\big(|F_1|_{L^{p_1}(Q)}+|F_2|_{L^{p_1}(Q)}\big).
\end{eqnarray*}
By the Sobolev embedding theorem again,
if $n+2\leq 2 p_1$,  then for any $s\geq p_1$,
$$|W|_{L^s(Q)}+|Y|_{L^s(Q)}\leq C\big(|W|_{W^{2,1}_{p_1}(Q)}+|Y|_{W^{2,1}_{p_1}(Q)}\big);$$
Otherwise, for $p_2=\frac{p_1(n+2)}{n+2-2p_1}$,
$$|W|_{L^{p_2}(Q)}+|Y|_{L^{p_2}(Q)}\leq C\big(|W|_{W^{2,1}_{p_1}(Q)}+|Y|_{W^{2,1}_{p_1}(Q)}\big)
\leq C\big(|F_1|_{L^{p_1}(Q)}+|F_2|_{L^{p_1}(Q)}\big).$$

\medskip

If we repeat the above arguments,  there always  exists an $N\in \mathbb{N}$, such that
$p_N\leq p < p_{N+1}$. At this moment,
$$|W|_{L^{p_{N+1}}(Q)}+|Y|_{L^{p_{N+1}}(Q)}\leq C\big(|W|_{W^{2,1}_{p_N}(Q)}+|Y|_{W^{2,1}_{p_N}(Q)}\big)
\leq C\big(|F_1|_{L^{p_N}(Q)}+|F_2|_{L^{p_N}(Q)}\big).$$
Therefore, using the $L^p$-estimates of linear parabolic equations,
by (\ref{q1}), we have that
\begin{eqnarray}\label{qq8}
\begin{array}{rl}
&|W|_{W^{2, 1}_p(Q)}+|Y|_{W^{2, 1}_p(Q)}
\leq C\big(|F_1|_{L^p(Q)}+|F_2|_{L^p(Q)}\big)
+C
\big(|W|_{L^{p_{N+1}}(Q)}+|Y|_{L^{p_{N+1}}(Q)}\big)\\[3mm]
&\leq C\big(|F_1|_{L^p(Q)}+|F_2|_{L^p(Q)}\big)
+C\big(|F_1|_{L^{p_N}(Q)}+|F_2|_{L^{p_N}(Q)}\big)\\[3mm]
&\leq C\big(|F_1|_{L^p(Q)}+|F_2|_{L^p(Q)}\big)\\[3mm]
&\leq C|\overline{y}|^2_{W^{2,1}_p(Q)}+
C|\eta|_{W^{2,1}_p(Q)}|\overline{y}|_{W^{2,1}_p(Q)}.
\end{array}
\end{eqnarray}

Furthermore, since $\overline{u}$ is the optimal control of the problem ${\bf(P_1)}$,
$$J_1(\overline{u})\leq J_1(0)\leq C\big(|y(\cdot, \cdot; 0)|^2_{L^2(Q)}+
|y_d|^2_{L^2(Q)}\big)\leq C\big(|y_0|^2_{H^{2}(\Omega)}+
|y_d|^2_{L^2(Q)}\big),$$
where $y(\cdot, \cdot; 0)$ denotes the solution to (\ref{aeq11})
 associated to $u\equiv 0$ and the initial value $y_0$.
This implies that
$$
|\overline{y}-y_d|_{L^2(Q)}+|\overline{u}|_{L^2(\omega\times(0, T))}\leq
C\big(|y_0|_{H^{2}(\Omega)}+
|y_d|_{L^2(Q)}\big).
$$
Hence,  by the well-posedness results for (\ref{c9}), we have
that
$$
|\overline{y}|_{W^{2, 1}_2(Q)}\leq C\big(|\overline{u}|_{L^2(\omega\times(0, T))}
+|y_0|_{H^{2}(\Omega)}\big)
\leq C\big(|y_0|_{H^{2}(\Omega)}+
|y_d|_{L^2(Q)}\big),
$$
and
$$
|\eta|_{W^{2, 1}_2(Q)}\leq
C|\overline{y}-y_d|_{L^2(Q)}
\leq C\big(|y_0|_{H^{2}(\Omega)}+
|y_d|_{L^2(Q)}\big).
$$

\medskip

By the Sobolev embedding theorem,
if $n\leq 2$,  then for any $s\geq 2$,
$$|\overline{y}|_{L^s(Q)}+|\eta|_{L^s(Q)}\leq C\big(|\overline{y}|_{W^{2,1}_2(Q)}
+|\eta|_{W^{2,1}_2(Q)}\big);$$
Otherwise, for $p_1=\frac{2(n+2)}{n-2}$,
$$|\overline{y}|_{L^{p_1}(Q)}+|\eta|_{L^{p_1}(Q)}\leq C\big(|\overline{y}|
_{W^{2,1}_2(Q)}+|\eta|_{W^{2,1}_2(Q)}\big)
\leq C\big(|y_0|_{H^{2}(\Omega)}+
|y_d|_{L^2(Q)}\big).$$

\medskip

Further, by the well-posedness results for (\ref{c9}) again, we have
that
$$
|\overline{y}|_{W^{2, 1}_{p_1}(Q)}\leq C\big(|\eta|_{L^{p_1}(\omega\times(0, T))}
+|y_0|_{W^{2, p_1}(\Omega)}\big)
\leq C\big(|y_0|_{W^{2, p_1}(\Omega)}+
|y_d|_{L^2(Q)}\big),
$$
and
$$
|\eta|_{W^{2, 1}_{p_1}(Q)}\leq
C|\overline{y}-y_d|_{L^{p_1}(Q)}
\leq C\big(|y_0|_{W^{2, p_1}(\Omega)}+
|y_d|_{L^{p_1}(Q)}\big).
$$
By the Sobolev embedding theorem again,
if $n+2\leq 2 p_1$,  then for any $s\geq p_1$,
$$|\overline{y}|_{L^s(Q)}+|\eta|_{L^s(Q)}\leq
C\big(|\overline{y}|_{W^{2,1}_{p_1}(Q)}+|\eta|_{W^{2,1}_{p_1}(Q)}\big);$$
Otherwise, for $p_2=\frac{p_1(n+2)}{n+2-2p_1}$,
$$|\overline{y}|_{L^{p_2}(Q)}+|\eta|_{L^{p_2}(Q)}
\leq C\big(|\overline{y}|_{W^{2,1}_{p_1}(Q)}+|\eta|_{W^{2,1}_{p_1}(Q)}\big)
\leq C\big(|y_0|_{W^{2, p_1}(\Omega)}+
|y_d|_{L^{p_1}(Q)}\big).$$
Repeating the above arguments, we can obtain that
$$
|\overline{y}|_{W^{2,1}_{p}(Q)}+|\eta|_{W^{2,1}_{p}(Q)}
\leq C\big(|y_0|_{W^{2, p}(\Omega)}+
|y_d|_{L^p(Q)}\big).
$$
The above estimate, together with (\ref{qq8}), shows that
$$
|W|_{W^{2, 1}_p(Q)}+|Y|_{W^{2, 1}_p(Q)}
\leq C\big(|y_0|^2_{W^{2, p}(\Omega)}+
|y_d|^2_{L^p(Q)}\big).
$$
Since $\overline{u}-\overline{v}=\beta^{-1}W$ in $\omega\times(0, T)$
in the interior-point case, the same estimate gives the
asserted bound for $|\overline{u}-\overline{v}|_{L^p(\omega\times(0, T))}$. This finishes the proof of Theorem \ref{th3}.

\endpf

\medskip

Next,  we give a proof of  Theorem \ref{th4}.

\medskip

\noindent {\bf Proof of Theorem \ref{th4}. }  The whole proof is divided into three parts.

\smallskip

{\bf Step 1. } First,  we give the necessary conditions satisfied by the optimal controls
$u^*$ and $v^*$.

\medskip

For
the optimal control $u^*\in \mathcal{U}_{ad}$ of the problem ${\bf (\widehat{P}_1)}$,
 any $u\in \mathcal{U}_{ad}$ and $\varepsilon\in(0, 1)$, set
$u_\varepsilon=u^*+\varepsilon(u-u^*)$. Then
$u_\varepsilon\in \mathcal{U}_{ad}$ and
\begin{equation}\label{c1LWZ}
\lim\limits_{\varepsilon\rightarrow 0}
\frac{\widehat{J}_1(u_\varepsilon)-\widehat{J}_1(u^*)}{\varepsilon}=
\alpha\int_Q |y^*-y_d|^{\hat{p}-2}(y^*-y_d)\xi dxdt+
\beta\int_0^T\int_{\omega} u^*(u-u^*) dxdt\geq 0,
\end{equation}
where $\xi\in L^2(0, T; H^1_0(\Omega))\cap
C([0, T]; L^2(\Omega))$ satisfies the following linear parabolic equation:
\begin{eqnarray}\label{c2LWZ}
	\left\{
	\begin{array}{ll}
		\xi_t-\sum\limits_{i, j=1}^{n}\big(a^{ij}(y^*)\xi_{x_i}\big)_{x_j}-
		\sum\limits_{i, j=1}^{n}\big[(a^{i j})'(y^*)y^*_{x_i}\xi\big]_{x_j}+f'(y^*)\xi=\chi_{\omega}
		(u-u^*)	
		&\mbox{ in }Q, \\ \ns
		\xi=0 &\mbox{ on }\Sigma, \\ \ns
		\xi(x, 0)=0 &\mbox{ in }\Omega. \\ \ns
	\end{array}
	\right.
\end{eqnarray}

Next, introduce the following linear parabolic equation:
\begin{eqnarray}\label{c3LWZ}
	\left\{
	\begin{array}{ll}
		\eta_t+\sum\limits_{i, j=1}^{n}\big(a^{ij}(y^*)\eta_{x_i}\big)_{x_j}-
		\sum\limits_{i, j=1}^{n} (a^{i j})'(y^*)y^*_{x_i}\eta_{x_j}-f'(y^*)\eta
		=\alpha|y^*-y_d|^{\hat{p}-2}(y^*-y_d)
		&\mbox{ in }Q, \\ \ns
		\eta=0 &\mbox{ on }\Sigma, \\ \ns
		\eta(x, T)=0 &\mbox{ in }\Omega. \\ \ns
	\end{array}
	\right.
\end{eqnarray}
In (\ref{c3LWZ}),  since $y^*\in W^{2, 1}_p(Q)$ and $p>n+2$,
we have that for $i, j=1, \cdots, n$,
$$
y^*,  a^{i j}(y^*)\in C^{1, 0}(\overline{Q}),  \quad
 (a^{i j})'(y^*)y^*_{x_i}, f'(y^*)\in C(\overline{Q}),\quad
 \mbox{and} \quad  y^*-y_d
	 \in L^\infty(Q).
$$
Hence, (\ref{c3LWZ}) admits a unique  strong solution
$\eta\in W^{2,1}_p(Q)$.
Moreover,
\begin{equation}\label{new1LWZ}
|\eta|_{W^{2, 1}_p(Q)}\leq C\alpha |y^*-y_d|^{\hat{p}-1}_{L^\infty(Q)}.
\end{equation}
Multiplying both sides of  the first equation in (\ref{c2LWZ}) by $\eta$ and integrating it
on $Q$, we obtain that
\begin{equation}\label{c4LWZ}
\alpha\int_Q |y^*-y_d|^{\hat{p}-2}(y^*-y_d) \xi dxdt+
\int_0^T\int_{\omega} (u-u^*)\eta dxdt=0.
\end{equation}
This, together with (\ref{c1LWZ}), indicates that
\begin{equation}\label{c5LWZ}
\int^T_0\int_\omega (\beta u^*-\eta)(u-u^*)
dxdt\geq 0,
\quad \forall \ u\in \mathcal{U}_{ad}.
\end{equation}

On the other hand, using the similar arguments above  for the optimal control problem ${\bf (\widehat{P}_2)}$ and recalling that $(v^*, w^*)$ is the unique optimal pair,  we get that
\begin{equation}\label{PP1LWZ}
\displaystyle\int^T_0\int_\omega
(\beta v^*-z)(u-v^*)dxdt\geq 0,
\quad \forall \ u\in \mathcal{U}_{ad},
\end{equation}
where $z$ satisfies the linear parabolic equation:
\begin{eqnarray}\label{c7LWZ}
	\left\{
	\begin{array}{ll}
		z_t+\Delta z=\alpha|w^*-y_d|^{\hat{p}-2}(w^*-y_d)
		&\mbox{ in }Q, \\ \ns
		z=0 &\mbox{ on }\Sigma, \\ \ns
		z(x, T)=0 &\mbox{ in }\Omega. \\ \ns
	\end{array}
	\right.
\end{eqnarray}
Similar to  (\ref{new1LWZ}),  we also have that
\begin{equation}\label{new1*LWZ}
|z|_{W^{2, 1}_p(Q)}\leq C\alpha |w^*-y_d|^{\hat{p}-1}_{L^{\infty}(Q)}.
\end{equation}

\medskip

\medskip

{\bf Step 2. } In this step,  we give the expressions of the optimal controls
$u^*$ and $v^*$.

\medskip

{\bf Case 1. }  Assume that $|u^*|_{L^p(\omega\times(0, T))}<\rho_1$
and $|v^*|_{L^p(\omega\times(0, T))}<\rho_1$, i.e.,
$u^*$ and $v^*$ are two  interior points of $\mathcal{U}_{ad}$.

\medskip

In this case,  by (\ref{c5LWZ}) and (\ref{PP1LWZ}), it follows that
\begin{equation}\label{AddLWZ}
\displaystyle u^*=\chi_\omega \beta^{-1} \eta\ \ \quad
\mbox{ and } \ \  \quad v^*=\chi_\omega \beta^{-1} z
\quad\quad\mbox{a.e.}\quad\mbox{ in }\ Q.
\end{equation}

\medskip

{\bf Case 2. }  Assume that $|u^*|_{L^p(\omega\times(0, T))}=\rho_1$
or $|v^*|_{L^p(\omega\times(0, T))}=\rho_1$, i.e.,
$u^*$ or $v^*$ is a  boundary point of $\mathcal{U}_{ad}$.

\medskip

If $u^*$  is a  boundary point of $\mathcal{U}_{ad}$, then  (\ref{lwz4}) and (\ref{lwz8}) hold for $u^*$.
Meanwhile,  by (\ref{new1LWZ}) and (\ref{App1}),  it follows that
\begin{eqnarray*}
&&|\eta|_{W^{2, 1}_p(Q)}\leq C\alpha|y^*-y_d|^{\hat{p}-1}_{L^\infty(Q)}
\leq C\alpha \big(|u^*|_{L^p(\omega\times(0, T))}+
|y_0|_{W^{2, p}(\Omega)}+
|y_d|_{L^\infty(Q)}\big)^{\hat{p}-1}
\leq C\alpha \rho_1^{\hat{p}-1}.
\end{eqnarray*}
Hence,
\begin{equation}\label{LWZ001}
\beta\rho_1\leq |\eta|_{L^p(\omega\times(0, T))}
\leq |\eta|_{W^{2,1}_p(Q)}\leq C\alpha\rho_1^{\hat{p}-1}.\end{equation}
Since $\hat{p}>2$,  we have that (\ref{LWZ001}) fails for a sufficiently small $\rho_1$.
This indicates that there exists a positive
constant $\rho_4$,
such that {\bf Case 2} will not
happen for $u^*$ when $|y_0|_{W^{2, p}(\Omega)} + |y_d|_{L^\infty(Q)}\leq \rho_4$ and
$\rho_1$ in $\mathcal{U}_{ad}$  is replaced by $\rho_4$.
The same argument,  applied to $v^*$ and $z$,  excludes the boundary
case for $v^*$ under the same smallness condition.

\medskip


\medskip

Hence, in such a case,  we have the  optimality system for the optimal control problem ${\bf(\widehat{P}_1)}$:
\begin{eqnarray}\label{c9LWZ}
	\left\{
	\begin{array}{ll}
		y^*_t-\sum\limits_{i,j=1}^{n}(a^{i j}(y^*)y^*_{x_i})_{x_j}+f(y^*)=\chi_{\omega}u^*=\chi_{\omega} \beta^{-1} \eta &\mbox{ in }Q, \\ \ns
	\eta_t+\sum\limits_{i, j=1}^{n}\big(a^{ij}(y^*)\eta_{x_i}\big)_{x_j}-
		\sum\limits_{i, j=1}^{n} (a^{i j})'(y^*)y^*_{x_i}\eta_{x_j}-f'(y^*)\eta
		=\alpha|y^*-y_d|^{\hat{p}-2}(y^*-y_d)		
		&\mbox{ in }Q, \\ \ns	
		y^*=\eta=0 &\mbox{ on }\Sigma, \\ \ns
		y^*(x, 0)=y_0(x),\  \eta(x, T)=0 &\mbox{ in }\Omega
	\end{array}
	\right.
\end{eqnarray}
and the  optimality system for the optimal control problem ${\bf(\widehat{P}_2)}$:
\begin{eqnarray}\label{c10LWZ}
	\left\{
	\begin{array}{ll}
		\displaystyle w^*_t-\Delta w^*=\chi_{\omega}v^*=
		\chi_\omega \beta^{-1} z  &\mbox{ in }Q, \\ \ns
	z_t+\Delta z=\alpha|w^*-y_d|^{\hat{p}-2}(w^*-y_d)		
		&\mbox{ in }Q, \\ \ns	
		w^*=z=0 &\mbox{ on }\Sigma, \\ \ns
		w^*(x, 0)=y_0(x),\  z(x, T)=0 &\mbox{ in }\Omega.
	\end{array}
	\right.
\end{eqnarray}

\medskip

{\bf Step 3. } We give the error estimates on the optimal trajectories.

\medskip

First,  set $Y=y^*-w^*$ and $W=\eta-z$. By
(\ref{c9LWZ}) and (\ref{c10LWZ}),  $(Y, W)$ satisfy the following system:
\begin{eqnarray}\label{c11***}
	\left\{
	\begin{array}{ll}
		Y_t-\Delta Y=
		F_1+\chi_{\omega} \beta^{-1} W&\mbox{ in }Q, \\ \ns
	W_t+\Delta W=
	F_2+F_3		&\mbox{ in }Q, \\ \ns	
		Y=W=0 &\mbox{ on }\Sigma, \\ \ns
		Y(x, 0)=0,\  W(x, T)=0 &\mbox{ in }\Omega,
	\end{array}
	\right.
\end{eqnarray}
where
\begin{eqnarray*}
&&F_1=\sum\limits_{i,j=1}^{n}(\widetilde{a}^{i j}(y^*)y^*_{x_i})_{x_j}-f(y^*),
 \quad
F_2=-\sum\limits_{i, j=1}^{n}\big(\widetilde{a}^{ij}(y^*)\eta_{x_i}\big)_{x_j}+
		\sum\limits_{i, j=1}^{n} (a^{i j})'(y^*)y^*_{x_i}\eta_{x_j}+f'(y^*)\eta,\\
		&&F_3=\alpha|y^*-y_d|^{\hat{p}-2} (y^*-y_d)-\alpha|w^*-y_d|^{\hat{p}-2} (w^*-y_d).
\end{eqnarray*}

Similar to  (\ref{q1}),
$$
\displaystyle |F_1|_{L^p(Q)}\leq C|y^*|^2_{W^{2,1}_p(Q)}\quad\mbox{ and }\quad|F_2|_{L^p(Q)}\leq C|\eta|_{W^{2,1}_p(Q)}|y^*|_{W^{2,1}_p(Q)}.
$$
Moreover,
\begin{eqnarray*}
&&|F_3|\leq C(|y^*-y_d|^{\hat{p}-2}+|w^*-y_d|^{\hat{p}-2})|y^*-w^*|
\leq C(|y^*|^{\hat{p}-2}_{L^\infty(Q)}+|w^*|^{\hat{p}-2}_{L^\infty(Q)}+
|y_d|^{\hat{p}-2}_{L^\infty(Q)})|Y|\\[2mm]
&&\leq C(|y_0|^{\hat{p}-2}_{W^{2, p}(\Omega)}+|u^*|^{\hat{p}-2}_{L^p(\omega\times(0, T))}
+|v^*|^{\hat{p}-2}_{L^p(\omega\times(0, T))}+
|y_d|^{\hat{p}-2}_{L^\infty(Q)})|Y|
\leq C\rho_4^{\hat{p}-2}|Y|.
\end{eqnarray*}

Furthermore, using the $L^p$-estimates of linear parabolic equations for the first and second equations in
(\ref{c11***}),  respectively, we get that
$$
|Y|_{W^{2, 1}_p(Q)}\leq C\big(|F_1|_{L^p(Q)}+
|W|_{L^p(Q)}
\big)
$$
and
$$
|W|_{W^{2, 1}_p(Q)}\leq C\big(|F_2|_{L^p(Q)}+
\rho_4^{\hat{p}-2}|Y|_{L^p(Q)}
\big).
$$
When $\rho_4$ is sufficiently small,  it follows that
\begin{equation}\label{lwz10LWZ}
|Y|_{W^{2, 1}_p(Q)}+
|W|_{W^{2, 1}_p(Q)}\leq
 C\big(|F_1|_{L^p(Q)}+
|F_2|_{L^p(Q)}
\big)\leq C\big(
|y^*|^2_{W^{2,1}_p(Q)}+|\eta|_{W^{2,1}_p(Q)}|y^*|_{W^{2,1}_p(Q)}
\big).
\end{equation}

\medskip

Finally,  we give an estimate on the solution $(y^*, \eta)$ to
(\ref{c9LWZ}). Using the $L^p$-estimates of linear parabolic equations again for the first and second equations in
(\ref{c9LWZ}),  respectively, we get that
$$
|y^*|_{W^{2, 1}_p(Q)}\leq C\big(|\eta|_{L^p(Q)}+
|y_0|_{W^{2, p}(\Omega)}
\big),
$$
and
\begin{eqnarray*}
&&|\eta|_{W^{2, 1}_p(Q)}\leq C|y^*-y_d|^{\hat{p}-1}_{L^\infty(Q)}
\\[2mm]
&&
\leq
C\big[
|y_d|^{\hat{p}-1}_{L^\infty(Q)}+\big(|u^*|_{L^p(\omega\times(0, T))}+
|y_0|_{W^{2, p}(\Omega)}
\big)^{\hat{p}-2}|y^*|_{W^{2, 1}_p(Q)}\big]
\leq C\big(
|y_d|^{\hat{p}-1}_{L^\infty(Q)}+\rho_4^{\hat{p}-2} |y^*|_{W^{2, 1}_p(Q)}
\big).
\end{eqnarray*}
When $\rho_4$ is sufficiently small,  it follows that
\begin{equation}\label{lwz10*LZW}
|y^*|_{W^{2, 1}_p(Q)}+
|\eta|_{W^{2, 1}_p(Q)}\leq
 C\big(|y_0|_{W^{2, p}(\Omega)}+
|y_d|_{L^\infty(Q)}
\big).
\end{equation}
By (\ref{lwz10LWZ}) and (\ref{lwz10*LZW}), we obtain that
$$
|Y|_{W^{2, 1}_p(Q)}+
|W|_{W^{2, 1}_p(Q)}\leq
C\big(|y_0|^2_{W^{2, p}(\Omega)}+
|y_d|^2_{L^\infty(Q)}
\big).
$$
Since $u^*-v^*= \beta^{-1} W$  in  $\omega\times(0, T)$, the same bound also gives the asserted estimate for
$|u^*-v^*|_{L^p(\omega\times(0,T))}$.
This finishes the proof of Theorem \ref{th4}.  \endpf

\medskip

\section{Appendix}

This section is devoted to proving the existence of optimal controls and Proposition \ref{prop1}.

\medskip

First, we prove the existence of   optimal controls for the problem ${\bf(P_1)}$.
\begin{lemma}\label{Aplemma}
There exists at least an optimal control for the optimal control problem ${\bf(P_1)}$.
\end{lemma}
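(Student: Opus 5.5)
We argue by the direct method of the calculus of variations on the bounded, closed, convex set $\mathcal{U}_{ad}$ of $L^p(\omega\times(0,T))$. Since $J_1\ge 0$, put $m=\inf_{u\in\mathcal{U}_{ad}}J_1(u)\in[0,\infty)$ and take a minimizing sequence $\{u_k\}\subset\mathcal{U}_{ad}$ with $J_1(u_k)\to m$. Because $|u_k|_{L^p(\omega\times(0,T))}\le\rho_1$ and $1<p<\infty$, $L^p$ is reflexive. After passing to a subsequence we therefore get $u_k\rightharpoonup \overline{u}$ weakly in $L^p(\omega\times(0,T))$. The set $\mathcal{U}_{ad}$ is convex and strongly closed, hence weakly closed, so $\overline{u}\in\mathcal{U}_{ad}$.

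Next we pass to the limit in the states. Let $y_k$ be the strong solution of (\ref{aeq11}) associated to $u_k$. By (\ref{App1}), $|y_k|_{W^{2,1}_p(Q)}+|y_k|_{C^{1,0}(\overline{Q})}\le 2\mathcal{C}\rho_1$ uniformly in $k$. Since $p>n+2$, the embedding $W^{2,1}_p(Q)\hookrightarrow C^{1,0}(\overline{Q})$ is compact: $W^{2,1}_p$ embeds into a Hölder space $C^{1+\gamma,(1+\gamma)/2}(\overline Q)$ for some $\gamma>0$, and we then apply Arzelà--Ascoli. Extracting a further subsequence, $y_k\rightharpoonup \hat y$ weakly in $W^{2,1}_p(Q)$ and $y_k\to\hat y$ strongly in $C^{1,0}(\overline{Q})$. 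The initial and boundary conditions pass to the limit, because traces are continuous under uniform convergence.

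We then identify $\hat y$ as the state of $\overline{u}$, which is the main step. Write the equation in nondivergence form,
$$y_{k,t}-\sum_{i,j}a^{ij}(y_k)y_{k,x_ix_j}-\sum_{i,j}(a^{ij})'(y_k)y_{k,x_i}y_{k,x_j}+f(y_k)=\chi_\omega u_k .$$
The terms that are nonlinear in lower-order derivatives, namely $(a^{ij})'(y_k)y_{k,x_i}y_{k,x_j}$, $f(y_k)$ and $a^{ij}(y_k)$, converge uniformly on $\overline Q$, by the $C^{1,0}$ convergence and the continuity of $a^{ij},(a^{ij})',f$. The terms $y_{k,t}$ and $y_{k,x_ix_j}$ converge weakly in $L^p(Q)$. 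A product $a^{ij}(y_k)y_{k,x_ix_j}$ of a uniformly convergent factor and a weakly convergent factor converges weakly in $L^p$ to $a^{ij}(\hat y)\hat y_{x_ix_j}$. Together with $\chi_\omega u_k\rightharpoonup\chi_\omega\overline u$, this shows that $\hat y$ is a strong solution of (\ref{aeq11}) with control $\overline{u}$ and initial datum $y_0$. The local uniqueness stated before (\ref{App1}) gives $\hat y=\overline{y}$, the state associated with $\overline u$.

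Finally, $y_k\to\overline y$ strongly in $L^2(Q)$, so the tracking term of $J_1$ converges. The control term $\frac{\beta}{2}|u|^2_{L^2}$ is convex and continuous on $L^2(\omega\times(0,T))$. Weak convergence in $L^p$ implies weak convergence in $L^2$ because $p>2$, so this term is weakly lower semicontinuous. Hence
$$J_1(\overline{u})\le\liminf_{k\to\infty}J_1(u_k)=m,$$
and $\overline{u}$ is optimal. The only delicate points are the compactness of $W^{2,1}_p\hookrightarrow C^{1,0}$ for $p>n+2$ and the weak--strong product argument in the quasi-linear principal part. Both are standard once the uniform bound (\ref{App1}) is available.
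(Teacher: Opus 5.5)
Your proposal is correct and follows the same route as the paper's proof. Both use a minimizing sequence, weak compactness in $L^p(\omega\times(0,T))$, the uniform $W^{2,1}_p(Q)$ bound giving weak $W^{2,1}_p(Q)$ and strong $C^{1,0}(\overline{Q})$ convergence of the states, identification of the limit as the state of $\overline{u}$, and lower semicontinuity of $J_1$. The only difference is that you justify steps the paper asserts without comment, and these details are sound: the compactness of $W^{2,1}_p(Q)\hookrightarrow C^{1,0}(\overline{Q})$ for $p>n+2$, the weak--strong product argument in the nondivergence form of the principal part, the appeal to uniqueness to identify the limit, and the weak lower semicontinuity of the control term.
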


\noindent {\bf Proof. }  Since  $J_1(\cdot)\geq 0$,
there exists a minimizing sequence $\{u_k\}_{k=1}^\infty\subseteq \mathcal{U}_{ad}$,
such that
$$
\lim\limits_{k\rightarrow\infty}J_1(u_k(\cdot))=\inf\limits_{u(\cdot)\in \mathcal{U}_{ad}} J_1(u(\cdot)).
$$
 Since  $|u_k|_{L^p(\omega\times(0, T))}\leq \rho_1$ for any $k\in\mathbb{N}$,   there exists a subsequence of  $\{u_k\}_{k=1}^\infty$ (still denoted by itself) and
 $\overline{u}\in \mathcal{U}_{ad}$, such that
 $$
 u_k\rightarrow \overline{u}\quad\quad\mbox{ weakly in } \quad L^p(\omega\times(0, T)),\quad \mbox{ as } k\rightarrow \infty.
 $$
Denote by $y_k$ the solution to (\ref{aeq11}) associated to $u=u_k$.  By the local well-posedness results for (\ref{aeq11}), there exists a subsequence of  $\{y_k\}_{k=1}^\infty$ (still denoted by itself) and
 $\overline{y}\in W^{2, 1}_p(Q)$, such that
 \begin{eqnarray*}
 &&y_k\rightarrow \overline{y}\quad\quad\mbox{ weakly in } \quad W^{2, 1}_p(Q),\quad \mbox{ as } k\rightarrow \infty;\\[3mm]
 &&y_k\rightarrow \overline{y}\quad\quad\mbox{ strongly in } \quad C^{1, 0}(\overline{Q}),\quad \mbox{ as } k\rightarrow \infty.
 \end{eqnarray*}

Hence,  $\overline{y}$ is the solution to (\ref{aeq11}) associated to $u=\overline{u}$, and
$$
\varliminf\limits_{k\rightarrow\infty} J_1(u_k)\geq J_1(\overline{u}).
$$
This indicates that $J_1(\overline{u})=\inf\limits_{u(\cdot)\in \mathcal{U}_{ad}} J_1(u(\cdot))$, and $\overline{u}\in \mathcal{U}_{ad}$ is an optimal control
 of ${\bf(P_1)}$.   \endpf

\medskip

\medskip

Next, we give a proof of Proposition \ref{prop1}.

\medskip

\noindent {\bf Proof of Proposition \ref{prop1}. } First,  let $z=y-w$,  where
$y$ and $w$ are the corresponding solutions to
$(\ref{aeq11})$ and $(\ref{aeq1*})$ with $(v=u)$. Then, $z\in W^{2, 1}_p(Q)$
satisfies the following equation:
\begin{eqnarray}\label{appendix1}
	\left\{
	\begin{array}{ll}
		z_t-\Delta z=\sum\limits_{i,j=1}^{n}(\widetilde{a}^{i j}(y)y_{x_i})_{x_j}-f(y) &\mbox{ in }Q, \\ \ns
		z=0 &\mbox{ on }\Sigma, \\ \ns
		z(x, 0)=0 &\mbox{ in }\Omega. \\ \ns
	\end{array}
	\right.
\end{eqnarray}
By the $L^p$-estimates of linear parabolic equations for (\ref{appendix1}),  it follows  that
\begin{eqnarray}\label{appendix2}
\begin{array}{rl}
&\displaystyle |z|_{W^{2,1}_p(Q)}\leq C\big|\sum\limits_{i,j=1}^{n}(\widetilde{a}^{i j}(y)y_{x_i})_{x_j}\big|_{L^p(Q)}+C|f(y)|_{L^p(Q)}\\
&\displaystyle\leq C\sum\limits_{i,j=1}^{n}\big|\widetilde{a}^{i j}(y)y_{x_i x_j}\big|_{L^p(Q)}
+C\sum\limits_{i,j=1}^{n}\big|(\widetilde{a}^{i j})'(y)y_{x_i}y_{x_j}\big|_{L^p(Q)}
+C|f(y)|_{L^p(Q)}.
\end{array}
\end{eqnarray}

Notice that for any $i, j=1, \cdots, n$, $\widetilde{a}^{i j}(0)=f(0)=f'(0)=0$.
This implies that
$$|\widetilde{a}^{i j}(y)|_{L^\infty(Q)}
=|\widetilde{a}^{i j}(y)-\widetilde{a}^{i j}(0)|_{L^\infty(Q)}\leq C|y|_{L^\infty(Q)}
\leq C|y|_{W^{2,1}_p(Q)},
$$and
$$|f(y)|_{L^p(Q)}
\leq C|y^2|_{L^p(Q)}\leq C|y|_{L^\infty(Q)}|y|_{L^p(Q)}\leq C|y|^2_{W^{2, 1}_p(Q)}.
$$
Hence,  it holds that for any $i, j=1, \cdots, n$,
\begin{eqnarray*}
&&\big|\widetilde{a}^{i j}(y)y_{x_i x_j}\big|_{L^p(Q)}
\leq C|\widetilde{a}^{i j}(y)|_{L^\infty(Q)}|y|_{W^{2,1}_p(Q)}
\leq C|y|^2_{W^{2,1}_p(Q)},\\[2mm]
&&\mbox{and  } \big|(\widetilde{a}^{i j})'(y)y_{x_i}y_{x_j}\big|_{L^p(Q)}
\leq C|y|_{C^{1, 0}(\overline{Q})} |\nabla y|_{L^p(Q)}
\leq C|y|^2_{W^{2,1}_p(Q)}.
\end{eqnarray*}
Combining the above estimates with (\ref{appendix2}), we obtain that
$$\displaystyle |z|_{W^{2,1}_p(Q)}\leq C
|y|^2_{W^{2,1}_p(Q)}.$$
This, together with (\ref{App1}), implies that
$$
\displaystyle |y-w|_{W^{2,1}_p(Q)}=|z|_{W^{2,1}_p(Q)}
\leq C
|y|^2_{W^{2,1}_p(Q)}
\leq C
\big(|u|^2_{L^p(Q)}+|y_0|^2_{W^{2, p}(\Omega)}\big).
$$
This completes the proof of Proposition \ref{prop1}.  \endpf


\end{document}